\newcommand{\Ac}{\mathcal{A}}
\newcommand{\Nc}{\mathcal{N}}
\newcommand{\Z}{\mathbb{Z}}
\newcommand{\st}{{\textrm{ : }}}
\newcommand{\<}{\langle}
\renewcommand{\>}{\rangle}
\newcommand{\qd}{\mathcal{QD}^{-1}}
\newcommand{\qdx}{{\mathcal{QD}}^{+1}}
\newtheorem{thm}{Theorem}
\newtheorem{lem}[thm]{Lemma}
\newtheorem{cor}[thm]{Corollary}
\newtheorem{rem}[thm]{Remark}
\newtheorem{pro}[thm]{Proposition}
\theoremstyle{definition}
\newtheorem{defn}[thm]{Definition}
\begin{document}
\title{On abelian Group Representability  of finite Groups}
\author{
Eldho K. Thomas, Nadya Markin,  Fr\'ed\'erique Oggier}
\address{Division of Mathematical Sciences, School of Physical and Mathematical Sciences, \newline\indent Nanyang Technological University, Singapore.} 
\email {ELDHO1@e.ntu.edu.sg,NMarkin@ntu.edu.sg,frederique@ntu.edu.sg}
\keywords{nilpotent group, group representability, entropic vector}
\subjclass{20D15  	;  
94A17 
} %

\thanks{E. K. Thomas was supported by Nanyang Technological University under Research 
Grant M58110049, N. Markin was supported by the Singapore National
Research Foundation under Research Grant NRF-CRP2-2007-03, F. Oggier was partially funded by 
Nanyang Technological University under Research Grant M58110049.}

\maketitle
\begin{abstract}
A set of quasi-uniform random variables $X_1,\ldots,X_n$ may be generated from a finite group $G$ and $n$ of its subgroups, with the corresponding entropic vector depending on the subgroup structure of $G$. 
It is known that the set of entropic vectors obtained by considering arbitrary finite groups is much richer than the one provided just by  abelian groups. In this paper, we start to investigate in more detail different families of non-abelian groups with respect to the entropic vectors they yield. In particular, we address the question of whether a given non-abelian group $G$ and some fixed subgroups $G_1,\ldots,G_n$ end up giving the same entropic vector as some abelian group $A$ with subgroups $A_1,\ldots,A_n$, in which case we say that 
$(A, A_1, \ldots, A_n)$ represents $(G, G_1, \ldots, G_n)$. If for any choice of subgroups $G_1,\ldots,G_n$, there exists some abelian group $A$ which represents $G$, we refer to $G$ as being abelian (group) representable for $n$. We completely characterize dihedral, quasi-dihedral  and dicyclic groups with respect to their abelian representability, as well as the case when $n=2$, for which we show a group is abelian representable if and only if it is nilpotent. This problem is motivated by understanding non-linear coding strategies for network coding, and network information theory capacity regions.    
\end{abstract}
%
%
%

\section{Introduction}

Let $X_1, \ldots, X_n$ be a collection of $n$ jointly distributed discrete random variables over some alphabet of size $N$.
We denote by $\Ac$ a subset of indices from $\Nc = \{1,\ldots ,n\}$, and $X_\Ac=\{X_i,~i\in\Ac\}$.
The {\it entropic vector} corresponding to $X_1,\ldots,X_n$ is the vector
\[
h=(H(X_1),\ldots,H(X_1,X_2),\ldots H(X_1,\ldots,X_n))
\]
which collects all the joint entropies $H(X_\Ac)$, $\Ac\subseteq \Nc$.
Consider the $(2^n-1)$-dimensional Euclidean space, with coordinates labeled by all possible non-empty subsets $\Ac \subseteq \{1, \ldots, n\}$.
The region formed by all entropic vectors $h$ is denoted by $\Gamma_n^*$. Determining $\Gamma_n^*$ is notoriously difficult. However, it is also the key to understanding the capacity region of any arbitrary multi-source multi-sink acyclic network, which can be computed as a linear function of the entropy vector over $\bar{\Gamma}_n^*$, under linear constraints \cite{YYZ,HS}.
 
It is known \cite{chan} that the closure $\bar{\Gamma}_n^*$ of the set of all entropic vectors is equal to the convex closure (the smallest closed convex set containing the set) of all entropic vectors generated by quasi-uniform random variables.

\begin{defn}
A probability distribution over a set of $n$ random variables $X_1,\ldots,X_n$ is said to be {\it quasi-uniform} if for any $\Ac\subseteq\Nc$, $X_\Ac$ is uniformly distributed
over its support $\lambda(X_\Ac)$:
\[
P(X_\Ac=x_\Ac)=
\left\{
\begin{array}{ll}
1/|\lambda(X_\Ac)| & \mbox{if }x_\Ac\in\lambda(X_\Ac), \\
0 & \mbox{otherwise}.
\end{array}
\right.
\]
\end{defn}

By definition, the entropy of a quasi-uniform distribution is
\[
H(X_\Ac) = -\sum_{x_\Ac\in\lambda(X_\Ac)} P(X_\Ac=x_\Ac)\log P(X_\Ac=x_\Ac)   
         = \log |\lambda(X_\Ac)|. 
\]
We now turn to the question of characterizing  quasi-uniform random variables. Let $G$ be a finite group with $n$ subgroups $G_1,\ldots,G_n$, and let $X$ be a random variable uniformly distributed over $G$, that is, $P(X=g)=1/|G|$ for any $g\in G$.
Define a new random variable $X_i=XG_i$, whose support is $[G:G_i]$ cosets of $G_i$ in $G$. Then
$P(X_i=gG_i)=|G_i|/|G|$ since $X_i=gG_i$ whenever $g\in G_i$, and 
$P(X_i=gG_i,~i\in\Ac)=|\cap_{i\in\Ac}G_i|/|G|$.
This shows that quasi-uniform random variables may be obtained from finite groups, more precisely:

\begin{thm}\cite{chan,T}
\label{thm:three}
For any finite group $G$ and any subgroups $G_1, \ldots, G_n$ of $G$, there exists a set of $n$ jointly distributed quasi-uniform discrete random variables $X_1,\ldots, X_n$ such that for all non-empty subsets $\Ac$ of $\Nc$, $H(X_\Ac)= \log{[G:G_\Ac]}$,
where $G_\Ac=\cap_{i\in\Ac}G_i$ and $[G:G_\Ac]$ is the index of $G_\Ac$ in $G$.
\end{thm}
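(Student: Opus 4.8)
The plan is to use exactly the construction indicated just before the statement: let $X$ be uniformly distributed on $G$ and, for each $i$, set $X_i = XG_i$, the coset of $G_i$ containing $X$. It then remains to check that the family $X_1,\ldots,X_n$ is quasi-uniform and to evaluate its joint entropies; both reduce to one elementary fact about intersections of cosets.

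Fix a nonempty $\Ac\subseteq\Nc$ and consider $X_\Ac=(X_i)_{i\in\Ac}$. Since $XG_i=g_iG_i$ exactly when $X\in g_iG_i$, a tuple $(g_iG_i)_{i\in\Ac}$ lies in the support of $X_\Ac$ iff $\bigcap_{i\in\Ac}g_iG_i\neq\emptyset$, in which case $X$ being uniform gives $P\bigl(X_\Ac=(g_iG_i)_{i\in\Ac}\bigr)=|\bigcap_{i\in\Ac}g_iG_i|/|G|$. The key step is to observe that this intersection, when nonempty, is a single coset of $G_\Ac$: if $a\in\bigcap_{i\in\Ac}g_iG_i$, then $g_iG_i=aG_i$ for every $i\in\Ac$, so $\bigcap_{i\in\Ac}g_iG_i=\bigcap_{i\in\Ac}aG_i=a\bigcap_{i\in\Ac}G_i=aG_\Ac$, which has cardinality $|G_\Ac|$ regardless of the tuple. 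Hence $P(X_\Ac=\cdot)$ is the constant $|G_\Ac|/|G|$ on its support, which is precisely quasi-uniformity.

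Next I would count the support of $X_\Ac$. The map $gG_\Ac\mapsto(gG_i)_{i\in\Ac}$ is well defined (if $g^{-1}g'\in G_\Ac\subseteq G_i$ then $g'G_i=gG_i$), it is injective (if $gG_i=hG_i$ for all $i\in\Ac$ then $g^{-1}h\in\bigcap_{i\in\Ac}G_i=G_\Ac$), and by the previous paragraph every element of the support has the form $(aG_i)_{i\in\Ac}$, hence is hit by $aG_\Ac$. Thus $|\lambda(X_\Ac)|=[G:G_\Ac]$, and the entropy formula for quasi-uniform distributions recorded above yields $H(X_\Ac)=\log|\lambda(X_\Ac)|=\log[G:G_\Ac]$; the case $\Ac=\{i\}$ recovers $H(X_i)=\log[G:G_i]$ in particular.

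The only genuine content is the coset identity $\bigcap_{i\in\Ac}g_iG_i=aG_\Ac$; the rest is bookkeeping, and since the result is already known (\cite{chan,T}) I do not anticipate any real obstacle. The single point requiring care is to keep a fixed convention of left versus right cosets throughout — the argument is symmetric in the two, but mixing them would invalidate both the intersection identity and the well-definedness of the support-counting map.
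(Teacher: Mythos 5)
Your proof is correct and is exactly the construction the paper sketches before the theorem (the standard Chan--Yeung argument): $X$ uniform on $G$, $X_i = XG_i$, with the key observation that a nonempty intersection $\bigcap_{i\in\Ac} g_iG_i$ is a single coset of $G_\Ac$, giving quasi-uniformity and $H(X_\Ac)=\log[G:G_\Ac]$. The paper cites \cite{chan,T} rather than proving it in full, and your write-up supplies precisely the bookkeeping that sketch omits.
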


Thus, given a group $G$ and $n$ subgroups $G_1,\ldots,G_n$, one obtains $n$ quasi-uniform random variables. The subgroup structure of $G$ determines the correlation among the $n$ random variables, and in turn the corresponding entropic vector.

\begin{defn}
Let $X_1, \ldots, X_n$ be $n$ jointly distributed discrete random variables. The corresponding entropic vector $h$ is said to be {\it group representable}, if there exists a group $G$, with subgroups $G_1,\ldots, G_n$ such that $H(X_\mathcal{A})= \log[G:G_\mathcal{A}]$ for all $\mathcal{A}$.
If in addition, the group $G$ is abelian, we say that $h$ is  \textit {abelian group representable}.
\end{defn}

Interestingly, considering only quasi-uniform distributions coming from finite groups is in fact enough to understand $\bar{\Gamma}_n^*$:

\begin{thm}\cite{Y}
Let $\Upsilon_n$ be the region of all group representable entropic vectors, and let $\overline{con}(\Upsilon_n)$ be its convex closure. Then
\[
\overline{con}(\Upsilon_n) ~=~ \bar{\Gamma}_n^*.
\]
\end{thm}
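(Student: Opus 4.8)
The plan is to establish the two inclusions. The inclusion $\overline{con}(\Upsilon_n)\subseteq\bar{\Gamma}_n^*$ is the easy one: by Theorem~\ref{thm:three} every group representable vector is the entropic vector of a quasi-uniform distribution, so $\Upsilon_n\subseteq\Gamma_n^*\subseteq\bar{\Gamma}_n^*$, and since $\bar{\Gamma}_n^*$ is closed and convex (by the cited result of \cite{chan} it is a convex closure), it contains the convex closure of $\Upsilon_n$.

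For the reverse inclusion I would again invoke the result of \cite{chan} quoted above, which reduces the problem to showing that the entropic vector $h=(H(X_\Ac))_{\Ac}$ of an \emph{arbitrary} family $X_1,\dots,X_n$ of quasi-uniform random variables lies in $\overline{con}(\Upsilon_n)$. Observe that choosing $G_i=G$ for all $i$ shows the zero vector is in $\Upsilon_n$, so $\lambda v\in con(\Upsilon_n)$ whenever $v\in\Upsilon_n$ and $0<\lambda\le1$; since $\overline{con}(\Upsilon_n)$ is closed, it therefore suffices to produce, for each $\varepsilon>0$, a group representable vector $v$ and a scalar $\lambda>0$ with $|\lambda v-h|<\varepsilon$ in every coordinate.

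The core of the argument is an asymptotic group construction over many independent copies. Let $\Omega$ be the finite support of the joint distribution of $X_1,\dots,X_n$; quasi-uniformity means this distribution is uniform on $\Omega$ and every marginal $X_\Ac$ is uniform on its image. Fix $N$ a multiple of $|\Omega|$ and a sequence $s=(s_1,\dots,s_N)\in\Omega^N$ of exact type, i.e.\ in which each $\omega\in\Omega$ occurs exactly $N/|\Omega|$ times. Let $G=S_N$ act on $\Omega^N$ by permuting the $N$ coordinates, and for $i\in\Nc$ let $G_i\le S_N$ be the stabiliser of the marginal sequence $(X_i(s_1),\dots,X_i(s_N))$. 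A short check shows that for every $\Ac\subseteq\Nc$ the intersection $G_\Ac=\bigcap_{i\in\Ac}G_i$ is exactly the stabiliser of $(X_\Ac(s_1),\dots,X_\Ac(s_N))$, i.e.\ the Young subgroup permuting, within each $X_\Ac$-value class, the $N\,P(X_\Ac=y)$ coordinates carrying that value; hence by orbit--stabiliser
\[
[S_N:G_\Ac]=\frac{N!}{\prod_y\big(N\,P(X_\Ac=y)\big)!}.
\]
This multinomial coefficient has at most $|\Omega|$ parts, with proportions $P(X_\Ac=y)$ independent of $N$, so Stirling's formula gives $\tfrac1N\log[S_N:G_\Ac]=H(X_\Ac)+O(\tfrac{\log N}{N})$, with an error uniform over the finitely many subsets $\Ac$. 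Thus $v_N:=(\log[S_N:G_\Ac])_{\Ac}$ is group representable, $\tfrac1N v_N\in con(\Upsilon_n)$, and $\tfrac1N v_N\to h$; closedness of $\overline{con}(\Upsilon_n)$ then gives $h\in\overline{con}(\Upsilon_n)$.

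The step I expect to be the main obstacle is exactly this construction: one must choose $G$ and the $G_i$ so that all $2^n-1$ indices $[G:G_\Ac]$ are controlled \emph{simultaneously}, which is what forces passing to $N$ i.i.d.\ copies and working with exact type classes — a naive stabiliser construction directly on $\Omega$ fails because value classes for different $X_i$ may intersect in sets of unequal (or zero) size, destroying the index count. Granting the uniform multinomial estimate, the remainder is bookkeeping, and combining the two inclusions yields $\overline{con}(\Upsilon_n)=\bar{\Gamma}_n^*$. (Alternatively one can bypass \cite{chan} by running the same construction for an arbitrary distribution with rational probabilities and invoking continuity of entropy, but the quasi-uniform reduction keeps all the type counts integral and is cleaner.)
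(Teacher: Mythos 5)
Your sketch is correct; note that the paper itself does not prove this statement but simply quotes it from the cited reference \cite{Y}, and what you have reconstructed is essentially the standard Chan--Yeung argument: the easy inclusion via Theorem \ref{thm:three} and convexity of $\bar{\Gamma}_n^*$, and the hard inclusion via the symmetric group $S_N$ acting on an exact-type sequence with Young-subgroup stabilizers, the multinomial index estimate $\tfrac1N\log[S_N:G_\Ac]\to H(X_\Ac)$, and scaling through the zero vector (obtained from $G_i=G$) to stay inside the convex closure. The only deviation from \cite{Y} is your preliminary reduction to quasi-uniform distributions via \cite{chan} (the original argument instead approximates an arbitrary entropic vector by one with rational probabilities), which is a harmless simplification that keeps the type counts integral.
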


It is also shown in \cite{chan} that the collection of all abelian group representable entropic vectors $\Upsilon_n^{ab}$  gives a non-trivial inner bound for $\Gamma_n^*$ when $n$ is at least 4.

\begin{thm} \cite{chan}
When $n\ge 4, ~\overline{con}\Upsilon_n^{ab} \ne \bar{\Gamma_n^*}$.
\end{thm}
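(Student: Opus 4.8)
\emph{Proof proposal.} The plan is to exhibit a single linear inequality satisfied by every vector of $\overline{con}\,\Upsilon_n^{ab}$ but violated by some vector of $\bar\Gamma_n^*$; the Ingleton inequality will do. Recall that for four random variables $X_i,X_j,X_k,X_\ell$ Ingleton's inequality reads, in entropic form,
\[
H(X_iX_k)+H(X_jX_k)+H(X_iX_\ell)+H(X_jX_\ell)+H(X_iX_j)\ \ge\ H(X_i)+H(X_j)+H(X_kX_\ell)+H(X_iX_jX_k)+H(X_iX_jX_\ell).
\]
For $n\ge4$ and each $4$-subset of $\Nc$ this is a homogeneous linear inequality on the coordinates of an entropic vector, so the set $\mathrm{Ing}_n$ of vectors satisfying all of them is a closed convex cone.

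The core step is to show $\Upsilon_n^{ab}\subseteq\mathrm{Ing}_n$. Given a finite abelian $G$ with subgroups $G_1,\dots,G_n$, Theorem~\ref{thm:three} gives $H(X_\Ac)=\log[G:G_\Ac]$ with $G_\Ac=\cap_{i\in\Ac}G_i$. I would pass to the Pontryagin dual $\widehat G$ and set $V_i=G_i^{\perp}$: the annihilator map is an inclusion-reversing bijection on subgroups with $|K^{\perp}|=[G:K]$ and $\bigl(\cap_{i\in\Ac}G_i\bigr)^{\perp}=\sum_{i\in\Ac}V_i$, so $H(X_\Ac)=\log\bigl|\sum_{i\in\Ac}V_i\bigr|$ and the Ingleton inequality for this vector becomes the multiplicative statement
\[
|V_1{+}V_3|\,|V_2{+}V_3|\,|V_1{+}V_4|\,|V_2{+}V_4|\,|V_1{+}V_2|\ \ge\ |V_1|\,|V_2|\,|V_3{+}V_4|\,|V_1{+}V_2{+}V_3|\,|V_1{+}V_2{+}V_4|
\]
for subgroups $V_1,\dots,V_4$ of a finite abelian group. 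Decomposing the ambient group into $p$-primary components (sums and intersections of subgroups split, and $\log|\cdot|$ is additive over primes) reduces this to the case of a finite abelian $p$-group $M$, where I would run the classical linear-algebra proof of Ingleton's inequality with ``choose a basis'' replaced by ``choose a composition series refining the relevant subgroups'': the argument manipulates only indices of subgroups in the modular lattice of $M$, not bases, so it survives. Since $\mathrm{Ing}_n$ is closed and convex, $\overline{con}\,\Upsilon_n^{ab}\subseteq\mathrm{Ing}_n$.

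It remains to produce a vector of $\bar\Gamma_n^*$ outside $\mathrm{Ing}_n$. It is classical that Ingleton's inequality fails for some entropic vectors; fix four jointly distributed quasi-uniform random variables $X_1,\dots,X_4$ whose entropic vector $h'$ violates it (by Theorem~\ref{thm:three} one may even take them coming from a non-abelian group with four subgroups). For $n=4$ we are done: $h'\in\Gamma_4^*\subseteq\bar\Gamma_4^*$ while $h'\notin\mathrm{Ing}_4\supseteq\overline{con}\,\Upsilon_4^{ab}$. For $n>4$ adjoin constant random variables $X_5=\dots=X_n$; the resulting $h\in\bar\Gamma_n^*$ agrees with $h'$ on every subset of $\{1,2,3,4\}$ and hence still violates the Ingleton inequality indexed by $\{1,2,3,4\}$, whereas any vector of $\Upsilon_n^{ab}$, coming from an abelian $G$ with subgroups $G_1,\dots,G_n$, restricts by forgetting $G_5,\dots,G_n$ to a vector of $\Upsilon_4^{ab}\subseteq\mathrm{Ing}_4$ and so does satisfy it, as does all of $\overline{con}\,\Upsilon_n^{ab}$. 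Thus $h\notin\overline{con}\,\Upsilon_n^{ab}$, proving the claim for every $n\ge4$.

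The main obstacle is the $p$-group case of the abelian inclusion: modularity of the subgroup lattice alone does not imply Ingleton's inequality, so one must check that the proof uses only multiplicativity of indices, the modular law, and the existence of filtrations with simple quotients in a $\Z/p^e\Z$-module, and not some shortcut special to vector spaces over a field. If one would rather not reprove this, the inclusion $\Upsilon_n^{ab}\subseteq\mathrm{Ing}_n$ can be quoted directly from \cite{chan}, after which only the convexity and the padding argument above remain.
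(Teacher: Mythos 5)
This theorem is not proved in the paper at all: it is quoted verbatim from \cite{chan}, so there is no in-paper argument to compare yours against. Your Ingleton-separation strategy is, however, exactly the standard route (and essentially Chan's): show every abelian-group-representable vector satisfies the Ingleton inequalities, note that these inequalities cut out a closed convex cone (so the inclusion passes to $\overline{con}\,\Upsilon_n^{ab}$), exhibit an entropic vector violating Ingleton, and pad with constant variables to lift from $n=4$ to general $n$. The convexity and padding steps are fine as written, and the restriction argument for $\Upsilon_n^{ab}$ is correct. Two comments on the parts you flag or gloss over. First, for the crux $\Upsilon_n^{ab}\subseteq\mathrm{Ing}_n$ you do not need Pontryagin duality, $p$-primary decomposition, or a composition-series surrogate for bases: since $G$ is abelian, $G_1G_2=G_1+G_2$ is a subgroup, and the quasi-uniform variable $Z$ it induces satisfies $H(Z\mid X_1)=H(Z\mid X_2)=0$ and, by the product formula $|G_1+G_2|=|G_1||G_2|/|G_1\cap G_2|$, also $H(Z)=I(X_1;X_2)$; the purely Shannon-theoretic lemma ``if such a common information exists for the pair $(X_1,X_2)$ then the Ingleton inequality with distinguished pair $(1,2)$ holds'' then finishes the inclusion with no module theory at all. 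Your proposed adaptation is plausible (it indeed only needs multiplicativity of indices and the modular law), but as written it is a sketch, and your fallback of citing \cite{chan} for this inclusion is uncomfortably close to citing the source of the very theorem being proved. Second, ``it is classical that Ingleton fails for some entropic vectors'' should be anchored to a concrete witness, e.g.\ the four-atom distributions of Mat\'u\v{s}--Studen\'y, or a group-characterizable example such as $S_5$ with four suitable subgroups (Mao--Hassibi), which also fits your quasi-uniform framing via Theorem \ref{thm:three}. With those two points tightened, your argument is a complete and correct proof of the cited statement.
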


Thus entropic vectors which are abelian group representable are a proper subset of entropic vectors coming from quasi-uniform random variables. This addresses a natural question of classifying groups with respect to the entropic vectors which they induce. In particular, we want to understand which groups belong to the same class as abelian groups with respect to this classification. We make this precise in the definitions below.

\begin{defn}
Let $G$ be a group and let $G_1, \ldots, G_n$ be fixed subgroups of $G$.
Suppose there exists an abelian group $A$  with subgroups $A_1, \ldots, A_n$ such that for every non-empty $\Ac \subseteq\Nc$, $[G:G_\Ac] = [A: A_\Ac]$, where $G_\Ac = \cap_{i\in \Ac}G_i, A_\Ac = \cap_{i\in \Ac}A_i$. Then we say that
$(A, A_1, \dots, A_n)$ {\it{represents}} $(G, G_1, \ldots, G_n)$.
\end{defn}

\begin{defn}
If  for every choice of subgroups $G_1, \ldots, G_n$ of $G$, there exists an abelian group $A$ such that
$(A, A_1, \dots, A_n)$ {{represents}} $(G, G_1, \ldots, G_n)$, we say that $G$ {\it{is abelian (group) representable for $n$}}. 
\end{defn}

Note that the abelian group $A$ may vary for different choices of subgroups $G_1, \ldots, G_n$.
Sometimes, however, it is possible to find an abelian group that works for all choices  $G_1, \ldots,G_n$.

\begin{defn} Suppose there exists an abelian group $A$ such that for every choice of subgroups
$G_1, \ldots, G_n \leq G$, there exist subgroups $A_1, \ldots, A_n \leq A$ such that
 $(A, A_1, \ldots, A_n)$ represents $(G, G_1, \ldots, G_n)$. Then we will say that $G$ is {\it{uniformly abelian (group) representable}} for $n$. (Alternatively, $A$ {\it{uniformly represents}} $G$. )
\end{defn}

When $G$ is abelian group representable, the quasi-uniform random variables $X_1,\ldots, X_n$ corresponding to subgroups $G_i$ can also be obtained using an abelian group $A$ and its subgroups $A_1, \ldots, A_n$.
If we choose the subgroup $G_i=G$, then $\log [G:G]=0$, that is $H(X_i)=0$, which implies $X_i$ is actually taking values deterministically. If  $G_i=\{1\}$, then $\log [G:\{1\}]= \log |G|$. Thus the entropy chain
rule yields
\[
H(X_i,X_\Ac)=H(X_i)+H(X_\Ac|X_i)
\]
for every $\Ac$ such that $i\notin \Ac$. Since $H(X_i)=\log |G|$ and $H(X_i,X_\Ac)=\log [G: \{1\} \cap G_\Ac]=\log |G|$,
we conclude that
\[
H(X_\Ac|X_i)=0.
\]
That is, given $X_i$, all the $n-1$ other random variables  are  functions of $X_i$ and we will consequently require that each subgroup $G_1, \ldots G_n$ is non-trivial and proper. Hence $n$ is at most the number of non-trivial proper subgroups of $G$.

The contributions of this paper are to introduce the notion of abelian group representability for an arbitrary finite group $G$, and to characterize the abelian group representability of several classes of groups:
\begin{itemize}
\item
Dihedral, quasi-dihedral and dicylic groups are shown to be abelian group representable for every $n \geq 2$ if (Section \ref{sec:2-group}) and only if (Section \ref{sec:nilpotent}) they are $2$-groups. 
When they are abelian group representable, they are furthermore uniformly abelian group representable for every $n$ (Section \ref{sec:2-group}).
\item
$p$-groups: $p$-groups are shown to be uniformly abelian group representable for $n=2$ in Section \ref{sec:p-groups}. 
\item
Nilpotent groups: in Section \ref{sec:nilpotent} we show that representability of nilpotent groups is completely determined by representability of $p$-groups. The set of nilpotent groups is shown to contain the set of abelian representable groups for any $n$ in Section \ref{sec:nilpotent}, the two   coincide  for $n=2$. 

\end{itemize}

%
%

\section{abelian Group Representability of classes of $2$-groups}
\label{sec:2-group}

In this section we establish uniform abelian group representability of dihedral, quasi-dihedral and dicyclic $2$-groups for any $n$.
We begin with a general lemma showing how abelian group representability of a group $H$ may imply abelian group representability of a group $G$.

\begin{lem}

Let $$\psi: G \rightarrow H$$ be a bijective map, which is additionally {\it{subgroup preserving}}, i.e., for any subgroup $G_i \leq G$, the set $\psi(G_i)$ is a subgroup of $H$. Suppose that $H$ is abelian (resp. uniformly abelian) group representable. Then so is $G$. In particular, if $H$ itself is abelian, then $G$ is abelian group representable.
\label{lem:psilemma}
\end{lem}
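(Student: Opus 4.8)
The plan is to show that $G$ together with subgroups $G_1,\ldots,G_n$ and $H$ together with the image ``subgroups'' $\psi(G_1),\ldots,\psi(G_n)$ produce exactly the same entropic vector, and then to transfer representability from $H$ to $G$ coordinate by coordinate over all subsets $\Ac\subseteq\Nc$. Since abelian group representability is a statement purely about the numbers $[G:G_\Ac]$, this is all that is needed.

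First I would record the elementary but crucial fact that an \emph{injective} map commutes with finite intersections: for any subsets $S,T\subseteq G$ one has $\psi(S\cap T)=\psi(S)\cap\psi(T)$, and hence by induction $\psi\!\left(\bigcap_{i\in\Ac}G_i\right)=\bigcap_{i\in\Ac}\psi(G_i)$ for every non-empty $\Ac$. The inclusion $\subseteq$ is automatic, and the reverse uses that $\psi(a)=\psi(b)$ forces $a=b$. This is the one place where bijectivity — rather than mere surjectivity — is essential, and it is really the only step that needs care; note also that $\psi$ is used only as a map of underlying sets, never as a homomorphism.

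Next, set $H_i:=\psi(G_i)$, which is a subgroup of $H$ by the subgroup-preserving hypothesis. By the previous paragraph, $\psi(G_\Ac)=\bigcap_{i\in\Ac}H_i=:H_\Ac$, again a subgroup of $H$. Since $\psi$ is a bijection of the underlying sets, $|H|=|G|$ and $|H_\Ac|=|\psi(G_\Ac)|=|G_\Ac|$, so
\[
[H:H_\Ac]=\frac{|H|}{|H_\Ac|}=\frac{|G|}{|G_\Ac|}=[G:G_\Ac]
\]
for every non-empty $\Ac\subseteq\Nc$. Hence $(H,H_1,\ldots,H_n)$ and $(G,G_1,\ldots,G_n)$ have identical joint entropies by Theorem \ref{thm:three}, i.e.\ the same entropic vector.

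Finally I would invoke the representability of $H$. If $H$ is abelian group representable, then for the particular subgroups $H_1,\ldots,H_n$ there is an abelian $A$ with subgroups $A_1,\ldots,A_n$ such that $[A:A_\Ac]=[H:H_\Ac]=[G:G_\Ac]$ for all $\Ac$, so $(A,A_1,\ldots,A_n)$ represents $(G,G_1,\ldots,G_n)$; since every choice of subgroups $G_1,\ldots,G_n$ of $G$ arises in this way, $G$ is abelian group representable. If $H$ is \emph{uniformly} abelian group representable via a fixed $A$, the same $A$ serves every choice of the $G_i$ by the same argument, so $G$ is uniformly abelian group representable. The last assertion follows by taking $H$ itself (with $A=H$, $A_i=H_i$) as the abelian witness. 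The only genuine obstacle is the intersection identity of the first step; everything else is routine transport of the definitions along $\psi$.
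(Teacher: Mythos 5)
Your proposal is correct and follows essentially the same route as the paper: use injectivity of $\psi$ to show it commutes with intersections of subgroups, conclude $[G:G_\Ac]=[H:\psi(G_\Ac)]$ for every $\Ac$, and then transfer the abelian representation of $(H,\psi(G_1),\ldots,\psi(G_n))$ (the same fixed $A$ in the uniform case) to $(G,G_1,\ldots,G_n)$. The appeal to the entropic-vector interpretation is an optional reframing; the paper argues directly with indices, but the substance is identical.
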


\begin{proof}
We want to show that given subgroups $G_1, \ldots, G_n \leq G$,  there exists an abelian group $A$ with subgroups $A_1, \ldots, A_n$ such that for any  subset $\Ac \subseteq \{1, \ldots, n\}$ the intersection subgroup $G_\Ac$ has index $[G:G_\Ac] =[A:A_\Ac]$.

Since $H$ is abelian group representable, $(H, \psi(G_1), \ldots \psi(G_n))$ can be represented by some $(A, A_1, \ldots, A_n)$. We claim that $(A, A_1, \ldots, A_n)$ will also represent $(G, G_1, \ldots G_n)$.

Since $\psi$ is bijective, for any $G_i \leq G$, the subgroup $\psi(G_i)$ has the same size and index in $H$ as $G_i$ has in $G$. In particular, $[A:A_i]=[H:\psi(G_i)]  = [G:G_i]$. This takes care of 1-intersection, i.e., when $|\Ac| = 1$.

Now we want to show that in fact $[G:G_\Ac] = [A:A_\Ac]$ for any $\Ac \subseteq \Nc$.
When considering intersections $G_\Ac$, let us first consider $2$-intersections $G_{12}= G_1 \cap G_2 \leq G$.
First observe that $\psi(G_1 \cap G_2) = \psi(G_1 )\cap( G_2)$. The containment $\psi(G_1 \cap G_2) \subseteq  \psi(G_1 )\cap\psi( G_2)$ is immediate. To see the containment $\psi(G_1 \cap G_2) \supseteq   \psi(G_1 )\cap( G_2)$ observe that if $y = \psi(g_1) = \psi(g_2)$ for some $g_1 \in G_1, g_2 \in G_2$ then bijectivity of $\psi$  implies that $g_1 = g_2$ and $y \in \psi(G_1\cap G_2)$.

Now, recalling that $|G| = |H|$,  we see that
$$[G: G_1\cap G_2] =  [H: \psi(G_1)\cap \psi(G_2)] = [A: A_1 \cap A_2].$$
More generally for arbitrary intersection $G_\Ac$, we have $[G:G_\Ac] = [A:A_\Ac]$:  this follows by induction on the number of subgroups involved in the intersection.
We conclude that $(A, A_i)_{i\in \Nc}$ represents $(G, G_i)_{i\in \Nc}$. If $H$ was uniformly abelian group representable, then $A$ was chosen independently of subgroups $\psi(G_1), \ldots , \psi(G_n)$ and it follows that $G$ is also uniformly abelian group representable. 

\end{proof}

\subsection{Dihedral and Quasi-dihedral $2$-Groups}

We will establish the abelian representability for dihedral and quasi-dihedral 2-groups. 
We define the dihedral group $D_{m}$ for $m \geq 3$ to be the symmetry group of the regular $m$-sided polygon. The group $D_m$ is of order $2m$, with a well known description in terms of generators and relations:
\[
D_{m}=\langle r,s|r^m=s^2=1,rs=sr^{-1}\rangle.
\]
Each element of $D_m$ is uniquely represented as $r^as^j$ where $1 \leq a \leq m$, $j=0,1$.

Note that the generator $s$ acts on $r$ by conjugation sending $r$ to an element $srs^{-1}= r^{-1}$. More generally, consider other possibilities for  $srs^{-1}=r^z$. When we apply this map twice, we see that since $s$ has order $2$, it must be that $z^2 \equiv 0 \mod 2^{m}$. In case $m=2^{k}$, there are $4$ such choices modulo $2^k$ for  $z \in \Z_{{2^{k}}}^\times$, i.e., $z \equiv \pm1, 2^{k-1} \pm1$. The choice $z=1$ will result in an abelian group, $z=-1$ gives the dihedral group $D_{2^k}$ above. Now we cover the remaining two choices. In either case, the subgroup structure is similar to that of a dihedral group, which will eventually allow us to conclude that these groups are abelian representable via Lemma \ref{lem:psilemma}.

Define two quasi-dihedral  groups, each of order $2^{k+1}$: 
$$ \qd_{2^k}= \langle r, s \mid r^{2^{k}}=s^2=1, rs = sr^{2^{k-1}-1} \rangle,$$
$$ \qdx_{2^k}= \langle r, s \mid r^{2^{k}}=s^2=1, rs = sr^{2^{k-1}+1} \rangle.$$

Let us make some brief observations about the structure of subgroups of \\
$D_{2^k}, \qd_{2^k}, \qdx_{2^k}$. 
First of all, note that a subgroup generated by $\langle r^{j_1}s, r^{j_2}s\rangle$ can also be generated by 
$   \langle r^{j_1}s, r^{j_2}s(r^{j_1}s)^{-1}\rangle = \langle r^{j_1}s, r^{j_2-j_1}\rangle$ which in  turn can be expressed as $  \langle  r^{2^i}, r^{j_1}s\rangle \text{ for some } i.$
We conclude that only one generator of the form $r^js$ is required. Trivially, only one generator of the form $r^j$ is required as well, since 
the cyclic subgroup $\langle r \rangle$ contains only cyclic subgroups $ \langle r^{2^i} \rangle$. 

Hence any subgroup can be expressed in terms of at most 2 generators of the form $r^{2^i}, r^jx$. When the subgroup is given by $\<r^{2^i}, r^jx\>$, without loss of generality we can further assume that $j < 2^i$: this is achieved by premultiplying the second generator $r^js$ repeatedly with $r^{-2^i}$. 
This gives us 3 basic subgroup types of dihedral and quasi-dihedral groups.
\begin{rem} \label{subtypesDqD}
Subgroups of $D_{2^k}, \qd_{2^k}, \qdx_{2^k}$ can be only of the following types: 
\begin{enumerate}
\item 
$
\langle r^{2^i}\rangle  		,  ~ 0 \leq i < k$,
\item 
$\langle r^a s\rangle    		, ~0\leq a <2^k$,
\item 
$\langle r^{2^i},r^cs\rangle		, ~0 \leq i < k, ~0\leq c <2^i.$ 
\end{enumerate}

\end{rem}

\subsubsection{Dihedral $2$-groups}
We now establish abelian group representability of dihedral $2$-groups for all $n$ by applying the lemma above. We start by showing that subgroups of $D_{2^k}$ map to subgroups of an abelian group.

\begin{pro} 
 Let $A = \oplus_{i=0}^{k} \Z_2$ be generated by the standard basis $\{e_0, \ldots e_k\}$ over integers $\Z_2$ modulo $2$. There exists a subgroup preserving bijection from the dihedral group $D_{2^k}$ to $A$.

\label{pro:psidihedral}
\end{pro}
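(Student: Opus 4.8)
The plan is to construct an explicit bijection $\psi: D_{2^k}\to A=\bigoplus_{i=0}^k\Z_2$ and verify it sends each of the three subgroup types of Remark \ref{subtypesDqD} to a subgroup of $A$. Since $|D_{2^k}|=2^{k+1}=|A|$, any subgroup-preserving injection is automatically a bijection, so I will focus on defining $\psi$ on the generators $r$ and $s$ and checking images of subgroups. The natural choice is to send $s\mapsto e_k$ (the ``reflection'' coordinate) and to send $r\mapsto e_0$, and more generally to encode $r^a$ via the binary digits of $a$: if $a=\sum_{i=0}^{k-1}a_i 2^i$ with $a_i\in\{0,1\}$, set $\psi(r^a)=\sum_{i=0}^{k-1}a_i e_i$, and $\psi(r^a s)=\psi(r^a)+e_k$. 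This is clearly a bijection of underlying sets, since every element of $D_{2^k}$ is uniquely $r^as^j$ with $0\le a<2^k$, $j\in\{0,1\}$, matching the $2^{k+1}$ elements of $A$.

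Next I would check the three subgroup types. For type (1), $\langle r^{2^i}\rangle$ consists of the elements $r^{a}$ with $a$ a multiple of $2^i$, i.e.\ those $a$ whose low $i$ binary digits vanish; its image is the subgroup $\langle e_i, e_{i+1},\dots,e_{k-1}\rangle$ of $A$ — here one uses that the map $a\mapsto(a_0,\dots,a_{k-1})$ carries the subgroup $2^i\Z_{2^k}$ onto $\{0\}^i\times\Z_2^{k-i}$ (a standard fact, since $\Z_{2^k}$ is a cyclic $2$-group). For type (2), $\langle r^a s\rangle=\{1, r^a s\}$ since $(r^as)^2=1$ in the dihedral group; its image $\{0, \psi(r^a)+e_k\}$ is a subgroup of order $2$. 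For type (3), $\langle r^{2^i}, r^c s\rangle$ with $0\le c<2^i$: its elements are $r^{a}$ and $r^{a+c}s$ as $a$ ranges over multiples of $2^i$ (using $rs=sr^{-1}$ to normalize), and since $c<2^i$ has zero in all binary positions $\ge i$, the images are exactly $\langle e_i,\dots,e_{k-1}\rangle$ together with $\big(\psi(r^c)+e_k\big)+\langle e_i,\dots,e_{k-1}\rangle$, which is the subgroup $\langle e_i,\dots,e_{k-1}, \psi(r^c)+e_k\rangle$ of $A$ (one checks closure: twice the coset generator lands back in $\langle e_i,\dots,e_{k-1}\rangle$ because $2c$ need not be a multiple of $2^i$ — wait, here I must be careful, and this is the point to watch).

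The step I expect to be the main obstacle is type (3), specifically verifying that the image is genuinely closed under addition in $A$ rather than merely a subset of the right size. In $D_{2^k}$ the product $(r^cs)(r^cs)=1$, so the ``square'' of the coset representative is trivial — this forces $2(\psi(r^c)+e_k)=0$ in $A$, which holds automatically since $A$ has exponent $2$; that is exactly why the abelian target must be elementary abelian and why this construction works for dihedral (as opposed to dicyclic) groups. So the real content is the bookkeeping: showing $\{\,a : 2^i\mid a\,\}$ maps onto $\langle e_i,\dots,e_{k-1}\rangle$ and that adjoining the single extra generator $\psi(r^c)+e_k$ produces a set of the correct cardinality $2^{k-i+1}$ equal to both $|\langle r^{2^i},r^cs\rangle|$ and $|\langle e_i,\dots,e_{k-1},\psi(r^c)+e_k\rangle|$. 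I would organize this as: (a) establish the bijection; (b) dispose of types (1) and (2) quickly; (c) handle type (3) by identifying the image set explicitly and invoking exponent-$2$ closure. Once the proposition is proved, Lemma \ref{lem:psilemma} immediately gives abelian group representability of $D_{2^k}$ for all $n$.
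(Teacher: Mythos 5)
Your proposal is correct and follows essentially the same route as the paper: the same binary-digit bijection $\psi(r^as^j)=\sum a_ie_i+je_k$, the same three-way case analysis of subgroups, and the same key observations for type (3), namely that $c<2^i$ separates the binary digits so the image is $V\cup\bigl(\psi(r^c)+e_k+V\bigr)$ with $V=\langle e_i,\dots,e_{k-1}\rangle$, and that exponent~$2$ of $A$ gives closure. The momentary worry you flag about ``$2c$'' resolves exactly as you say, so there is no gap.
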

\begin{proof}

For each element $r^a s^j$ we use the base $2$ representation of exponent $a = \sum_{i=0}^{k-1}a_i2^i$ to define the map $\psi: D_{2^k} \rightarrow A$:

$$\psi: r^a s^j \mapsto \sum_{i=0}^{k-1}a_ie_i + je_k.$$
We show that the map $\psi$ is a subgroup preserving bijection.

First note that since each element is uniquely represented as $r^as^j$ for some $a, j$, the map $\psi$ is indeed well-defined and clearly bijective. Hence we only need to verify  that  $\psi$ is subgroup preserving. In other words, if $H \leq D_{2^k}$ is a subgroup, then the image $\psi(H)$ is a subgroup of $A$. Since each element in $A$ is its own inverse, and $\psi(H)$ contains the identity, we only need to check the closure property of  $\psi(H)$ in order to prove that it is a subgroup.

To that end, we investigate the subgroups of $D_{2^k}$.  All subgroups of $D_{2^k}$ are of the form (see Remark \ref{subtypesDqD})

\begin{enumerate}
\item 
$
\langle r^{2^i}\rangle  		= \{r^a \st a \equiv  0 \mod 2^i\},  ~ 0 \leq i < k$,
\item 
$\langle r^a s\rangle    		=   \{1, r^a s\}, ~0\leq a <2^k$,
\item 
$\langle r^{2^i},r^cs\rangle    	= \{r^{a}, r^b s  \st a \equiv 0 \mod 2^i, b \equiv c \mod 2^i  \}, \\~0 \leq i < k, ~0\leq c <2^k.
$ 

\end{enumerate}


\noindent{\bf{Case 1}}: $H=\langle r^{2^i}\rangle   = \{r^a \st a \equiv  0 \mod 2^i\}$ . Note that $a < 2^k$ is a multiple of $2^i$ if and only if  $a = \sum_{j=i}^{k-1}a_j2^j$, i.e., the terms $a_0, \ldots a_{i-1}$ of the binary expansion of $a$ are $0$. In other words, the image

$$\psi(H) = \{ \sum_{j=i}^{k-1}a_je_j \}.$$ Clearly this set is closed under addition.

\noindent {\bf{Case 2}}: $H= \langle r^a s \rangle= \{1, r^a s\}$. Then the image  $$\psi(H) = \{\sum_{i=0}^{k-1}a_ie_i + e_k, 0\}.$$ Clearly this set of size 2 is a subgroup of $A$.\\

\noindent {\bf{Case 3}}: $H=\langle r^{2^i}, r^c s\rangle ,~ 0 \le c <2^i$.


Indeed $H  =  
\{r^{2^ih}, r^{c+2^ih}s \st  h \in \Z\}$.

We verify directly that $\psi(H)$ is closed under addition by showing that 

$$x, y \in H \implies \psi(x)+\psi(y) \in H.$$ This involves considering 3 cases: 
\begin{itemize}
\item
$x, y \in \<r^{2^i}\>$
\item 
$ x \in \<r^{2^i}\> , y = r^{c+2^ih }s$ 
\item 
both $x,y$ are of the form $r^{c+2^ih}s$. 
\end{itemize}
\begin{enumerate}

\item  
$x, y \in \<r^{2^i}\>$ implies that $\psi(x) + \psi(y) \in \psi(H).$ This follows identically to {\bf{Case 1}}. \\

\item 
$ x \in \<r^{2^i}\> , y = r^{c+2^ih' }s$ implies that $\psi(x) + \psi(y) \in \psi(H).$

We show that $\psi(r^{2^ih})+\psi(r^{2^ih'+c} s) = \psi(r^{2^i h'' +c  } s) \in   \psi(H)$.
To see this, observe using the binary expansion of exponents, that when $c < 2^i$ we can actually  factor $\psi(r^{2^i h'+c}) = \psi(r^{2^i h'})+\psi(r^c)$. Hence

$$\psi(r^{2^i h})+\psi(r^{2^i h' +c} s )  =
\psi(r^{2^i h}) + \psi(r^{2^i h'})+ \psi(r^c)+ \psi( s )  = $$
$$\psi(r^{2^i h''})+\psi(r^c)+\psi(s)= \psi(r^{2^i h'' +c}s) \in \psi(H).$$ \\

\item both $x,y$ of the form $r^{c+2^ih}s$ 
implies that $\psi(x) + \psi(y) \in \psi(H). $

Now $\psi(r^{2^ih+c} s)+\psi(r^{2^ih'+c} s) = \psi(r^{2^ih+c})+\psi(s)+\psi(r^{2^ih'+c} )+\psi(s) =$
$$ \psi(r^{2^ih})+\psi(r^c) +\psi(r^{2^ih'})+\psi(r^c) =
 \psi(r^{2^ih''}) \in \psi(H).$$

\end{enumerate}

\end{proof}

\begin{pro}
\label{pro:dig}
The dihedral group $D_{2^k}$ is uniformly abelian group representable for all $n$. \end{pro}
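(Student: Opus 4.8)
The plan is simply to feed Proposition~\ref{pro:psidihedral} into Lemma~\ref{lem:psilemma}. Proposition~\ref{pro:psidihedral} supplies a subgroup preserving bijection $\psi\colon D_{2^k}\to A$, where $A=\oplus_{i=0}^{k}\Z_2$ is abelian (and note $|A|=2^{k+1}=|D_{2^k}|$, so the two groups have the same order, which is what Lemma~\ref{lem:psilemma} implicitly uses when it writes $|G|=|H|$). The ``in particular'' clause of Lemma~\ref{lem:psilemma} already gives that $D_{2^k}$ is abelian group representable; what remains is to upgrade this to \emph{uniform} representability.

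For the uniform statement I would observe that an abelian group $A$ uniformly represents itself: for any subgroups $A_1,\dots,A_n\leq A$ one trivially has $[A:A_\Ac]=[A:A_\Ac]$ for all $\Ac\subseteq\Nc$, and the group $A$ is fixed once and for all, independently of the chosen subgroups; so $A$ is uniformly abelian group representable. Lemma~\ref{lem:psilemma} (its ``uniformly'' version) then transfers this property through $\psi$: for any subgroups $G_1,\dots,G_n\leq D_{2^k}$ the triple $(A,\psi(G_1),\dots,\psi(G_n))$ represents $(D_{2^k},G_1,\dots,G_n)$, and the same $A$ serves for every choice of the $G_i$. That is precisely the definition of $D_{2^k}$ being uniformly abelian group representable for $n$, and since $n$ was arbitrary we are done.

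There is essentially no obstacle left at this stage: all the genuine content has been discharged into Proposition~\ref{pro:psidihedral} (the verification, via the three-case analysis on the subgroup types of Remark~\ref{subtypesDqD}, that $\psi$ carries subgroups of $D_{2^k}$ to subgroups of $A$) and into Lemma~\ref{lem:psilemma} (the fact that a subgroup preserving bijection between groups of equal order preserves every intersection index $[G:G_\Ac]$, proved there by induction on $|\Ac|$ from $\psi(G_i\cap G_j)=\psi(G_i)\cap\psi(G_j)$). The one point I would take care not to skip is the elementary remark that ``$H$ itself abelian'' implies ``$H$ uniformly abelian group representable,'' so that the stronger, uniform conclusion of Lemma~\ref{lem:psilemma} is actually the one being invoked here.
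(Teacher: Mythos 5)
Your proposal is correct and follows essentially the same route as the paper: the paper's proof also consists of feeding the subgroup preserving bijection of Proposition~\ref{pro:psidihedral} into Lemma~\ref{lem:psilemma}, with uniformity observed because $A$ depends only on the size of $D_{2^k}$ and not on the chosen subgroups. Your slightly more explicit remark that an abelian group uniformly represents itself is just a spelled-out version of the same point.
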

\begin{proof}
This follows from Proposition \ref{pro:psidihedral} and Lemma \ref{lem:psilemma}. The uniform part comes from the fact that $A$ only depends on the size of $D_{2^k}$ and not on the choice of subgroups of $D_{2^k}$.

\end{proof}

We next use abelian representability of dihedral groups to derive abelian representability of quasi-dihedral groups. The idea is that these two classes of groups have the same subgroups. 



\subsubsection{Quasi-dihedral $2$-Groups}


We already know by Remark \ref{subtypesDqD} what subgroups of quasi-dihedral groups look like in terms of generators. However, for the purpose of  defining a subgroup preserving map $\psi$ (which will not be a homomorphism!), we want to know exactly what elements these subgroups consist of.
The next lemma describes what the subgroups of quasi-dihedral groups look like. 

\begin{lem}
Let $G = \qd_{2^k} $ or $\qdx_{2^k}$ be a quasi-dihedral group of size $2^{k+1}$.Then all the subgroups  of $G$ are of the form 

\begin{enumerate}
\item $\langle r^{2^i}\rangle= \{r^{a} \st a \equiv 0 \mod 2^i\},  ~0\le i \le k-1 $,

\item $\langle r^{2^i},~r^js\rangle=  \{r^{a},r^{b}s \st a \equiv 0\mod 2^i, b \equiv j \mod 2^i\}$, $0\le i \leq k-1,~ 0\le j \le 2^i-1$, 

\item 
\begin{itemize}
\item When $G = \qd_{2^k}$, then 
$\langle r^js\rangle = 
\begin{cases} 
\{1, r^js\} & j \equiv 0 \mod 2 \\
\{1, r^js, r^{2^{k-1}},  r^{2^{k-1}+j}s\} & j \equiv 1 \mod 2 
\end{cases}
$

\item When $G = \qdx_{2^k}$, then  
$\langle r^js\rangle  =  
\begin{cases}
 \< r^{2j}, r^js \>  {\text{  is of type (2)}} & j \neq 0 \\
 \{1,s\} & j = 0.
 \end{cases}
$
\end{itemize} 
\end{enumerate}

\label{qdSubgroups}

\end{lem}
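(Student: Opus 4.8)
The plan is to compute the subgroups of a quasi-dihedral group $G$ directly from the presentation, using the three generator-types already isolated in Remark~\ref{subtypesDqD} and then spelling out which group elements they actually contain. The only place where $\qd_{2^k}$ and $\qdx_{2^k}$ differ from $D_{2^k}$ is in how $s$ conjugates powers of $r$, so the work concentrates on the subgroups that involve a reflection-type generator $r^js$.

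First I would record the conjugation/multiplication rules: in $\qd_{2^k}$ one has $sr = r^{2^{k-1}-1}s$, hence $(r^js)^2 = r^{j}sr^{j}s = r^{j}\cdot r^{j(2^{k-1}-1)}\cdot s^2 = r^{j2^{k-1}}$, and similarly in $\qdx_{2^k}$ one gets $(r^js)^2 = r^{j2^{k-1}+2j} = r^{j(2^{k-1}+2)}$. This single computation controls everything: it tells us the order of the element $r^js$ and therefore what the cyclic subgroup $\langle r^js\rangle$ is. In $\qd_{2^k}$, $r^{j2^{k-1}} = 1$ exactly when $j$ is even (so $\langle r^js\rangle$ has order $2$) and equals $r^{2^{k-1}}$ when $j$ is odd (so $\langle r^js\rangle = \{1, r^js, r^{2^{k-1}}, r^{2^{k-1}+j}s\}$ has order $4$), which is case~(3) of the statement. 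In $\qdx_{2^k}$, when $j=0$ we simply get $\{1,s\}$; when $j\neq 0$, $r^{j(2^{k-1}+2)}$ is a nontrivial power of $r$, and I would check that $\langle r^js\rangle$ then contains $\langle r^{2j}\rangle$ (more precisely $\langle r^{\gcd(2j,2^k)}\rangle$, and one argues the $2$-adic valuation of $2j$ works out so this is a type-(2) subgroup $\langle r^{2^i}, r^js\rangle$ with $2^i = \gcd(2j,2^k)$ and $j < 2^i$ after reduction).

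Next I would handle the remaining subgroup types. Type~(1), $\langle r^{2^i}\rangle$, lies inside the cyclic group $\langle r\rangle$ and is unaffected by the twisting, so its description as $\{r^a : a\equiv 0 \bmod 2^i\}$ is immediate and identical to the dihedral case. For type~(3) subgroups $\langle r^{2^i}, r^cs\rangle$ with $0\le c < 2^i$: here I use that $\langle r^{2^i}\rangle$ is normal (it is inside $\langle r\rangle$, which is index $2$), so the subgroup is the union of the coset $\langle r^{2^i}\rangle$ and the coset $r^cs\langle r^{2^i}\rangle$; computing $r^cs \cdot r^{2^ih} = r^{c + 2^ih(2^{k-1}\mp 1)}s$ and noting $2^ih(2^{k-1}\mp1) \equiv \mp 2^ih \equiv 0 \bmod 2^i$, one sees the second coset is exactly $\{r^bs : b\equiv c \bmod 2^i\}$, giving case~(2) of the statement. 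I must also confirm that a putative generator $\langle r^{2^i}, r^cs\rangle$ with odd reduced step in $\qd_{2^k}$ genuinely requires $i \le k-1$ and doesn't collapse; the case $i=0$ reproduces the whole group, which is excluded by the indexing.

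The main obstacle I anticipate is the bookkeeping in $\qdx_{2^k}$ for $\langle r^js\rangle$ with $j\neq 0$: one must show the cyclic subgroup generated by a single reflection-type element is already of type~(2), i.e., identify the correct $2^i$ and verify the normalization $j < 2^i$ is consistent. The key identity is that the subgroup $\langle r^js\rangle$ contains $(r^js)^2 = r^{j(2^{k-1}+2)}$; since $2^{k-1}+2 = 2(2^{k-2}+1)$ and $2^{k-2}+1$ is odd (for $k\ge 3$), the $2$-adic valuation of $j(2^{k-1}+2)$ is $v_2(j)+1$, so $\langle r^{j(2^{k-1}+2)}\rangle = \langle r^{2^{v_2(j)+1}}\rangle$, and then $r^js$ together with this power generates a type-(2) subgroup with $i = v_2(j)+1$. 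A small separate check is needed for small $k$ (e.g. $k=2$, where $\qd_4$ and $\qdx_4$ degenerate), but those are finite and can be verified by hand. Apart from that, every step is a coset computation of the kind already carried out in Proposition~\ref{pro:psidihedral}, so the proof is essentially routine once the squaring formula is in place.
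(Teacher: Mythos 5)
Your proposal follows essentially the same route as the paper's proof: reduce to the generator types of Remark~\ref{subtypesDqD}, compute $(r^js)^2$ in each group (getting $r^{j2^{k-1}}$ resp.\ $r^{j(2^{k-1}+2)}$), and identify the elements of each subgroup type by coset computations, with the $2$-adic valuation of $2j$ handling $\langle r^js\rangle$ in $\qdx_{2^k}$ exactly as the paper does. The one caveat — shared with the paper, which asserts cases (1)--(2) ``follow identically'' — is that in $\qdx_{2^k}$ the union $\langle r^{2^i}\rangle\cup r^cs\langle r^{2^i}\rangle$ is a subgroup only when $(r^cs)^2\in\langle r^{2^i}\rangle$, i.e.\ $2^i\mid 2c$; for other $c$ the group $\langle r^{2^i},r^cs\rangle$ is the larger set of the same type with $i$ replaced by $\min(i,v_2(c)+1)$, which leaves the lemma's conclusion (and its use in Proposition~\ref{pro:quasi}) intact.
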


\begin{proof}
First assume that $G= \qd_{2^k}$. 
\begin{enumerate}
\item $\langle r^{2^i}\rangle= \{r^{a} \st a \equiv 0 \mod 2^i\} $

This case is obvious.

\item $\langle r^{2^i},~r^js\rangle=  \{r^{a},r^{b}s \st a \equiv 0\mod 2^i, b \equiv j \mod 2^i\}; $ {\\ \small{$ ~0 \leq  i \leq  k-1 , ~ 0\le j \le 2^i-1$ }}.
  
For this we observe that 
$r^js r^{2^i} = r^{j+2^i({2^{k-1}-1} ) }s $. 

Additionally note that 

$$r^js r^j s = r^{ j+j(2^{k-1}-1)} s^2 = r^{j(2^{k-1})} = 
\begin{cases}
1 & j \equiv  0 \mod 2 \\
r^{2^{k-1}} & j \equiv 1 \mod 2
\end{cases}$$ \item


 
$\langle r^js\rangle = 
\begin{cases} 
\{1, r^js\} & j \equiv 0 \mod 2 \\
\{1, r^js, r^{2^{k-1}},  r^{2^{k-1}+j}s\} & j \equiv 1 \mod 2 
\end{cases}
$

In particular when $j $ is odd, the subgroup $\<r^js  \>$ can be expressed in form (2) as $\<r^{2^{k-1}}, r^js\>$.
\end{enumerate}

Now let $G = \qdx_{2^k}$. 
The cases (1), (2) follow identically to $\qd_{2^k}$. 

For case (3), the case $j=0$ is obvious,  for $j > 0$, observe
$$r^jsr^js= r^jr^{j(2^{k-1}+1)}=r^{j(2^{k-1}+2)}=r^{2j(2^{k-2}+1)}.$$
But $\<r^{2j(2^{k-2}+1)}\> = \<r^{2j} \>$ and hence we have 

 $$\langle r^js\rangle = \langle r^{2j},~r^js\rangle. $$
Setting $2^i || 2j$ be the highest power of $2$ dividing $j$,  this subgroup is in fact of type (2) and consists of elements 
$ \{r^{a}, r^{j'}s \st a \equiv 0 \mod 2^i, j' \equiv j \mod 2^i\} .
$
\end{proof}

\begin{pro}
\label{pro:quasi}
The quasi dihedral group $\qd_{2^k}$ and  $\qdx_{2^k}$ are uniformly abelian group representable for all $n$.
\end{pro}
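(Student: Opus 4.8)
The plan is to mimic exactly the strategy used for dihedral $2$-groups: exhibit a subgroup-preserving bijection $\psi$ from $\qd_{2^k}$ (resp. $\qdx_{2^k}$) to the elementary abelian group $A = \oplus_{i=0}^{k}\Z_2$, and then invoke Lemma~\ref{lem:psilemma} together with Proposition~\ref{pro:dig}'s observation that $A$ depends only on the order of the group, which gives the uniform part. So the first step is to write down the candidate map: for $G = \qd_{2^k}$, send $r^a s^j \mapsto \sum_{i=0}^{k-1} a_i e_i + j e_k$ using the binary expansion $a = \sum_{i=0}^{k-1} a_i 2^i$, exactly as in Proposition~\ref{pro:psidihedral}. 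Well-definedness and bijectivity are immediate from uniqueness of the normal form $r^a s^j$, and since every element of $A$ is an involution and $\psi(H)$ always contains $0$, the only thing to check is closure of $\psi(H)$ under addition for each subgroup $H$.

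Next I would go through the three subgroup types listed in Lemma~\ref{qdSubgroups}. For type (1), $H = \langle r^{2^i}\rangle = \{r^a : a \equiv 0 \bmod 2^i\}$, the image is $\{\sum_{j=i}^{k-1} a_j e_j\}$, a coordinate subspace, hence closed — identical to Case~1 of Proposition~\ref{pro:psidihedral}. For type (2), $H = \langle r^{2^i}, r^j s\rangle$ with $0 \le j < 2^i$, the elements are $\{r^a, r^b s : a \equiv 0 \bmod 2^i,\ b \equiv j \bmod 2^i\}$, and the key arithmetic fact, already used in the dihedral case, is that when $j < 2^i$ one has $\psi(r^{2^i h + j}) = \psi(r^{2^i h}) + \psi(r^j)$ because the binary digits do not overlap; the three sub-cases (both elements rotations, one rotation and one reflection, both reflections) then go through verbatim as in Case~3 of Proposition~\ref{pro:psidihedral}, since $\psi$ never sees the actual group multiplication, only the coset-type bookkeeping on exponents. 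For type (3): when $G = \qd_{2^k}$ and $j$ is even, $H = \{1, r^j s\}$ has image of size $2$, automatically a subgroup; when $j$ is odd, Lemma~\ref{qdSubgroups} tells us $H = \langle r^{2^{k-1}}, r^j s\rangle$ is actually of type (2), so it is already covered. For $G = \qdx_{2^k}$, type (3) with $j \ne 0$ is again of type (2) by Lemma~\ref{qdSubgroups}, and $j = 0$ gives $\{1, s\}$ with image of size $2$.

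Having verified $\psi$ is subgroup-preserving, Lemma~\ref{lem:psilemma} applied with $H = A$ abelian yields that $\qd_{2^k}$ and $\qdx_{2^k}$ are abelian group representable for all $n$, and since the target $A$ is determined solely by $|G| = 2^{k+1}$ and not by the choice of $G_1,\ldots,G_n$, the representation is uniform. The main obstacle — really the only nontrivial point — is making sure the exponent arithmetic in type (2) still works for the quasi-dihedral multiplication rule: here one must use Lemma~\ref{qdSubgroups}'s explicit description of the element sets (which absorbs the twist $r^j s r^{2^i} = r^{j + 2^i(2^{k-1}\mp 1)} s$ into the condition $b \equiv j \bmod 2^i$), so that once the subgroup is written as a set of the stated form, the verification of closure of $\psi(H)$ is purely about binary expansions and is formally identical to the dihedral computation. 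I would state this explicitly: the argument of Proposition~\ref{pro:psidihedral} depends on the subgroups only through their description as sets in Lemma~\ref{qdSubgroups}, and those descriptions coincide (in the relevant type-(1)/(2) cases, and reduce to size-$2$ subgroups otherwise) with the dihedral ones, so no new computation is needed.
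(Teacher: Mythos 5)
Your proof is correct, but it routes the reduction differently from the paper. The paper's proof does not map $\qd_{2^k}$ or $\qdx_{2^k}$ to the abelian group $A=\oplus_{i=0}^{k}\Z_2$ directly; instead it defines the ``identity on normal forms'' bijection $\psi: r^is^j \mapsto r^is^j$ from the quasi-dihedral group to the dihedral group $D_{2^k}$, checks via Lemma~\ref{qdSubgroups} that the element-sets of quasi-dihedral subgroups coincide with element-sets of dihedral subgroups (so $\psi$ is subgroup preserving), and then applies Lemma~\ref{lem:psilemma} with $H=D_{2^k}$, which is already known to be uniformly abelian group representable by Proposition~\ref{pro:dig} --- note that Lemma~\ref{lem:psilemma} only requires the target to be (uniformly) abelian representable, not abelian. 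Your map is exactly the composition of the paper's bijection with the binary-expansion bijection of Proposition~\ref{pro:psidihedral}, so it is a subgroup-preserving bijection for the same underlying reason; the price you pay is having to re-run the closure-under-addition verification of Proposition~\ref{pro:psidihedral}, and your key observation justifying this --- that the computation depends on a subgroup only through its description as a set of elements, which Lemma~\ref{qdSubgroups} shows matches the dihedral case (types (1)/(2)) or is a two-element subgroup --- is precisely what makes the repetition legitimate; you state this explicitly, which closes the only potential gap (the quasi-dihedral group law differs, but $\psi$ never uses it). The paper's factorization through $D_{2^k}$ buys brevity and reuse of Proposition~\ref{pro:dig} as a black box; your direct route is more self-contained and makes the uniformity claim transparent, since the target $A$ visibly depends only on $|G|=2^{k+1}$. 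Both are valid proofs of the statement.
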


\begin{proof}
To prove the result we construct a subgroup-preserving bijection from quasi-dihedral to dihedral groups. By Lemma \ref{lem:psilemma} and the fact that dihedral groups are uniformly abelian group representable, the result follows. 

Let $G=\qd_{2^k}$ or $\qdx_{2^k}$. 

Elements of $G$  can be uniquely written as $$\{r^is^j \st 0 \leq i < 2^{k}, j=0,1\}.$$ 

Also all the elements in $D_{2^k}$ can be uniquely written as $$\{r^is^j \st 0 \leq i < 2^{k}, j=0,1\}, $$ keeping in mind  that $r, s \in D_{2^k}$ are not the same as $r, s \in G$, as the group law for the groups $G$ and $D_{2^k}$ is not the same. 

Define a subgroup preserving bijection $$\psi: G \rightarrow D_{2^k} $$ 
$$\psi: r^is^j \mapsto r^is^j. $$ 
The map $\psi $ is well-defined and clearly bijective. It remains to show that $\psi$ is subgroup preserving.

Now Lemma \ref{qdSubgroups} describes which elements subgroups of $G$ consist of. The proof of Proposition \ref{pro:psidihedral} gives a similar description for subgroups of $D_{2^k}$. Verifying that the two coincide via $\psi$ gives the result. As an example, consider a subgroup of form (2). We have

$$\psi(\langle r^{2^i},~r^js\rangle) = \psi(\{r^{a},r^{b}s \st a \equiv 0\mod 2^i, b \equiv j \mod 2^i\}) =$$
$$\{r^{a},r^{b}s \st a \equiv 0\mod 2^i, b \equiv j \mod 2^i\} \leq D_{2^k}. $$
Cases (1) and (3) follow immediately as well. 
Hence $\psi$ is a subgroup preserving bijection and since $D_{2^k}$ is abelian group representable by Proposition \ref{pro:dig}, Lemma \ref{lem:psilemma} implies that $\qd_{2^k}$ and $\qdx_{2^k}$ are uniformly abelian group representable as well.

\end{proof}

\subsection{Dicyclic $2$-Groups}


Next we consider the case of dicyclic groups, another well studied class of non-abelian groups. The results are similar to that of dihedral groups.
A dicyclic group $DiC_m$ of order $4m$ is generated by two elements $a, x$ as follows:
$$DiC_m = \langle a,x \st a^{2m}=1, x^2=a^m, xa= a^{-1}x \rangle .$$
Every element of $DiC_m$ can be uniquely presented as $a^ix^j$, where $0\leq i < 2m$, $j=0,1$.

{\it{A generalized quaternion group}} is a dicyclic group with $m$ a power of 2. We now study the subgroups of $DiC_{2^k}$. 
From the definition, we know that $|DiC_{2^k}|=2^{k+2}$. All the elements of $DiC_{2^k}$ are of the form $a^i, a^ix ,~ 1\le i\le 2^{k+1}$ where $x^2= a^{2^k}$ and $x^3= a^{2^k}x$.

As is the case with dihedral groups, any subgroup $\langle a^{j_1}x,a^{j_2}x \rangle = \langle a^{j_1-j_2}, a^{j_2}x \rangle$, which in turn can be represented as
 $\langle a^{2^i}, a^{j_2}x\rangle$, since $\langle a^{j_1-j_2}\rangle = \langle a^{2^i}\rangle$ for some $i$. Hence we need at most one generator of the form $a^jx$. Trivially, since $\<a \>$ is cyclic, we only need one generator of the form $a^{2^i}$ as well. 
 

Now consider the subgroup $\langle a^j x \rangle$. Its elements are $  \{a^jx, x^2=a^{2^k}, a^{2^k+j}x, 1\}$, so it  can be written in the form $\langle a^{2^k} ,a^j x \rangle$.


We conclude that subgroups of $DiC_{2^k}$ are of types 
\begin{enumerate}
\item $\langle a^{2^i}\rangle, ~0\le i \leq  k$
\item $\langle a^{2^i},a^jx\rangle, ~0\le i \leq k,~ 0\le j \le 2^i-1$.
\end{enumerate}

\begin{pro}
\label{pro:dic}
The dicyclic group $DiC_{2^{k-1}}$ is uniformly abelian group representable for all $n$. 
\end{pro}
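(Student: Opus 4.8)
The plan is to mimic the strategy used for dihedral and quasi-dihedral groups: construct a subgroup-preserving bijection $\psi$ from $DiC_{2^{k-1}}$ to a suitable abelian group, and then invoke Lemma \ref{lem:psilemma}. Note that $DiC_{2^{k-1}}$ has order $2^{k+1}$ (since $|DiC_m| = 4m$ with $m = 2^{k-1}$), the same order as $D_{2^k}$, $\qd_{2^k}$ and $\qdx_{2^k}$. Since we have already shown that $D_{2^k}$ is uniformly abelian group representable (Proposition \ref{pro:dig}), the cleanest route is to build a subgroup-preserving bijection $\psi: DiC_{2^{k-1}} \to D_{2^k}$ rather than going directly to $A = \oplus_{i=0}^{k}\Z_2$; either target works, but $D_{2^k}$ makes the bookkeeping shorter.

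\medskip

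First I would fix uniform notations for the two groups. Write elements of $DiC_{2^{k-1}}$ uniquely as $a^i x^j$ with $0 \le i < 2^k$, $j \in \{0,1\}$, and elements of $D_{2^k}$ uniquely as $r^i s^j$ with $0 \le i < 2^k$, $j \in \{0,1\}$. Define
\[
\psi: DiC_{2^{k-1}} \to D_{2^k}, \qquad \psi: a^i x^j \mapsto r^i s^j.
\]
This is manifestly well-defined and bijective (it is the identity on exponents). The only thing to check is that $\psi$ carries subgroups to subgroups. Using the classification just established, every subgroup of $DiC_{2^{k-1}}$ is either of type (1), $\langle a^{2^i}\rangle = \{a^a : a \equiv 0 \bmod 2^i\}$ with $0 \le i \le k$, or of type (2), $\langle a^{2^i}, a^j x\rangle = \{a^a, a^b x : a \equiv 0 \bmod 2^i,\ b \equiv j \bmod 2^i\}$ with $0 \le i \le k$, $0 \le j \le 2^i - 1$; here one uses that $\langle a^j x\rangle = \langle a^{2^k}, a^j x\rangle$ and $x^2 = a^{2^k}$ to see there are no extra types. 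For a type (1) subgroup, $\psi$ sends it to $\{r^a : a \equiv 0 \bmod 2^i\} = \langle r^{2^i}\rangle \le D_{2^k}$. For a type (2) subgroup, $\psi$ sends it to $\{r^a, r^b s : a \equiv 0 \bmod 2^i,\ b \equiv j \bmod 2^i\}$, which is exactly the description of the type (3) subgroup $\langle r^{2^i}, r^j s\rangle$ of $D_{2^k}$ from the proof of Proposition \ref{pro:psidihedral} (with type (2) subgroups $\{1, r^j s\}$ of $D_{2^k}$ arising as the $i = k$ special case). Hence $\psi$ is subgroup-preserving.

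\medskip

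Applying Lemma \ref{lem:psilemma} with $G = DiC_{2^{k-1}}$ and $H = D_{2^k}$, and using that $D_{2^k}$ is uniformly abelian group representable (Proposition \ref{pro:dig}), gives that $DiC_{2^{k-1}}$ is uniformly abelian group representable for all $n$; the uniform part follows because the representing abelian group $A = \oplus_{i=0}^k \Z_2$ depends only on $|DiC_{2^{k-1}}| = 2^{k+1}$ and not on the chosen subgroups. I expect the only real point requiring care — the main obstacle, such as it is — to be the verification that the element-wise descriptions of the subgroups of $DiC_{2^{k-1}}$ and of $D_{2^k}$ match up under $\psi$ as \emph{sets} (not just as abstract subgroups of the same order), in particular checking that a single generator of the form $a^j x$ always suffices and that $\langle a^j x\rangle$ collapses into a type (2) subgroup because of the relation $x^2 = a^{2^k}$; once that set-level dictionary is in place, the rest is immediate from the already-proved dihedral case.

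\begin{proof}
Recall that $|DiC_{2^{k-1}}| = 4 \cdot 2^{k-1} = 2^{k+1} = |D_{2^k}|$. Write the elements of $DiC_{2^{k-1}}$ uniquely as $a^i x^j$ with $0 \le i < 2^k$ and $j \in \{0,1\}$, and the elements of $D_{2^k}$ uniquely as $r^i s^j$ with $0 \le i < 2^k$ and $j \in \{0,1\}$ (where $r, s \in D_{2^k}$ are not to be confused with $a, x \in DiC_{2^{k-1}}$, since the two groups have different multiplication). Define
\[
\psi: DiC_{2^{k-1}} \to D_{2^k}, \qquad \psi: a^i x^j \mapsto r^i s^j.
\]
Since every element of each group is uniquely of the stated form, $\psi$ is well-defined and bijective. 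It remains to show that $\psi$ is subgroup preserving.

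By the classification above, every subgroup of $DiC_{2^{k-1}}$ is of one of the following two types:
\begin{enumerate}
\item $\langle a^{2^i}\rangle = \{a^a \st a \equiv 0 \bmod 2^i\}$, $~0 \le i \le k$;
\item $\langle a^{2^i}, a^j x\rangle = \{a^a, a^b x \st a \equiv 0 \bmod 2^i,\ b \equiv j \bmod 2^i\}$, $~0 \le i \le k$, $~0 \le j \le 2^i - 1$,
\end{enumerate}
where the subgroup $\langle a^j x\rangle = \{1, a^j x, a^{2^k}, a^{2^k + j}x\}$ is the type (2) subgroup $\langle a^{2^k}, a^j x\rangle$ because $x^2 = a^{2^k}$.

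Now compare with the subgroups of $D_{2^k}$ as described in the proof of Proposition \ref{pro:psidihedral}:
\[
\langle r^{2^i}\rangle = \{r^a \st a \equiv 0 \bmod 2^i\}, \qquad
\langle r^{2^i}, r^c s\rangle = \{r^a, r^b s \st a \equiv 0 \bmod 2^i,\ b \equiv c \bmod 2^i\},
\]
together with $\langle r^a s\rangle = \{1, r^a s\}$, which is the $i = k$ instance of the latter. Applying $\psi$ to a type (1) subgroup of $DiC_{2^{k-1}}$ yields $\{r^a \st a \equiv 0 \bmod 2^i\} = \langle r^{2^i}\rangle \le D_{2^k}$, and applying $\psi$ to a type (2) subgroup yields $\{r^a, r^b s \st a \equiv 0 \bmod 2^i,\ b \equiv j \bmod 2^i\} = \langle r^{2^i}, r^j s\rangle \le D_{2^k}$. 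Hence $\psi$ maps every subgroup of $DiC_{2^{k-1}}$ to a subgroup of $D_{2^k}$, so $\psi$ is a subgroup preserving bijection.

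By Proposition \ref{pro:dig}, $D_{2^k}$ is uniformly abelian group representable for all $n$. Therefore Lemma \ref{lem:psilemma} applies and shows that $DiC_{2^{k-1}}$ is uniformly abelian group representable for all $n$; the representing abelian group $A = \oplus_{i=0}^k \Z_2$ depends only on $|DiC_{2^{k-1}}| = 2^{k+1}$ and not on the choice of subgroups.
\end{proof}
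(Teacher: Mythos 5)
Your proposal takes exactly the paper's route: the identity-on-exponents bijection $\psi: a^i x^j \mapsto r^i s^j$ from $DiC_{2^{k-1}}$ onto $D_{2^k}$, the two-type classification of dicyclic subgroups, and then Lemma \ref{lem:psilemma} together with Proposition \ref{pro:dig}, so it is essentially the same proof. One indexing slip should be fixed: in $DiC_{2^{k-1}}$ the element $a$ has order $2^k$, so $a^{2^k}=1$ and the relation is $x^2 = a^{2^{k-1}}$ (not $a^{2^k}$); hence $\langle a^j x\rangle = \{1, a^jx, a^{2^{k-1}}, a^{2^{k-1}+j}x\} = \langle a^{2^{k-1}}, a^j x\rangle$ and the type-(2) subgroups should be indexed by $0 \le i \le k-1$, after which your verification goes through verbatim.
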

\begin{proof}
This proof is an application of Lemma \ref{lem:psilemma}, which allows us to use the fact that $D_{2^k}$ was already shown to be abelian group representable, to conclude that so is $DiC_{2^{k-1}}$.

To apply Lemma \ref{lem:psilemma}, we must define a subgroup preserving bijection $$\psi: DiC_{2^{k-1}} \rightarrow D_{2^k}.$$

Since the elements in $DiC_{2^{k-1}}$ can be uniquely written as $$\{a^ix^j \st 0 \leq i < 2^k, j=0,1\}$$ while all the elements in $D_{2^k}$ can be uniquely written as $$\{r^is^j \st 0 \leq i < 2^k, j=0,1\},$$
 the map

$$\psi: a^ix^j \mapsto r^is^j$$ is well-defined and clearly bijective.
It remains to show that $\psi$ is subgroup preserving.
But from the discussion above, we see that the subgroups of $DiC_{2^{k-1}}$ are

\begin{enumerate}
\item $\langle a^{2^i}\rangle, ~0\le i < k$,
\item $\langle a^{2^i},a^jx\rangle, ~0\le i < k,~ 0\le j \le 2^i-1$.
\end{enumerate}

The images of both kinds of subgroups under $\psi$ do indeed form subgroups of $D_{2^k}$:
\begin{enumerate}
\item $\psi(\langle a^{2^i}\rangle) = \psi(\{    a^{2^ih} \st h \in \Z \}) =  \{r^{2^i h}  \st h \in \Z\} $,

\item $\psi(\langle a^{2^i},a^jx \rangle) = \psi (\{   a^{2^ih}, a^{2^i h +j }x \st h \in \Z \}) = \{r^{2^ih},r^{2^i h +j }s \st h \in \Z\}$.
\end{enumerate}

Hence $\psi$ is a subgroup preserving bijection and since by Proposition \ref{pro:dig} the dihedral group $D_{2^k}$ is abelian group representable, Lemma \ref{lem:psilemma} implies that $DiC_{2^{k-1}}$ is uniformly abelian group representable as well.

\end{proof}

Propositions \ref{pro:dig} and \ref{pro:dic}  generalize Proposition 3 of \cite{thom}.



The results we have obtained on representability of $2$-groups rely on the use of a subgroup preserving bijection $\psi$ and Lemma {\ref{lem:psilemma}}. The map $\psi$ defined for dihedral groups in Proposition \ref{pro:psidihedral} does not generalize to $p$-groups, as we can no longer trivially claim closure under inverses. We employ different methods to deal with abelian representability of odd order $p$-groups in the following section. 

%
%
\section{abelian Group Representability $p$-Groups}
\label{sec:p-groups}

We start with a simple lemma which establishes a necessary condition for abelian representability. In the next section this lemma is used to exclude the class of all non-nilpotent groups.
\begin{lem}
Let $G$ be a group which is abelian group representable. Then for any subgroups $G_1, G_2$ with intersection $G_{12}$ and corresponding indices $i_1, i_2, i_{12}$ in $G$, it must be the case that $$i_{12} \mid i_1i_2.$$
\label{lem:abrepIndices}

\end{lem}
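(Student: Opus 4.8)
The plan is to use the hypothesis to transport the claim to an abelian group, where the index of an intersection of two subgroups is easy to control. Fix subgroups $G_1,G_2\leq G$ with intersection $G_{12}=G_1\cap G_2$ and indices $i_1=[G:G_1]$, $i_2=[G:G_2]$, $i_{12}=[G:G_{12}]$. Since $G$ is abelian group representable, there is an abelian group $A$ with subgroups $A_1,A_2$ such that $[A:A_1]=i_1$, $[A:A_2]=i_2$, and $[A:A_1\cap A_2]=i_{12}$. So it suffices to prove the divisibility $[A:A_1\cap A_2]\mid [A:A_1][A:A_2]$ for an abelian group $A$; the statement for $G$ then follows verbatim.

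The key step is the computation inside $A$. In an abelian group the product $A_1A_2$ is again a subgroup, and the second isomorphism theorem gives $[A_1A_2:A_1]=[A_2:A_1\cap A_2]$. Chaining indices,
\[
[A:A_1\cap A_2]=[A:A_2]\,[A_2:A_1\cap A_2]=[A:A_2]\,[A_1A_2:A_1].
\]
Since $A_1\leq A_1A_2\leq A$, the index $[A_1A_2:A_1]$ divides $[A:A_1]=i_1$. Therefore $[A:A_1\cap A_2]$ divides $[A:A_2]\cdot i_1=i_1i_2$, which is exactly what we want. (One should note the indices are finite since $G$, hence $A$, is finite, so all these manipulations are legitimate.)

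I expect the only subtlety — not really an obstacle — to be making explicit why the property transfers: the definition of abelian group representability guarantees an $A$ matching \emph{all} intersection indices for the chosen $G_1,\ldots,G_n$, and here we only need $n=2$, so we simply apply the definition with the two given subgroups. The group-theoretic inequality itself is standard, and it is worth remarking that the same argument shows $i_{12}=i_1i_2$ precisely when $A_1A_2=A$, i.e. when the two subgroups are "independent" in $A$; this is the kind of constraint that later lets one rule out non-nilpotent groups, where one can exhibit $G_1,G_2$ violating $i_{12}\mid i_1i_2$.
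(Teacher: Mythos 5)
Your proof is correct and takes essentially the same route as the paper: both pass to the abelian representative $(A,A_1,A_2)$ guaranteed by the definition and then exploit that $A_1A_2$ is a subgroup of $A$ of order $|A_1||A_2|/|A_1\cap A_2|$, which divides $|A|$ by Lagrange. The paper phrases this inside the quotient $A/A_{12}$ (product of two trivially intersecting subgroups), while you phrase it via the second isomorphism theorem and an index chain in $A$; the underlying argument is the same.
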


\begin{proof}

Let $(A, A_1, A_2)$ represent $(G, G_1, G_2)$. Since $A$ is abelian,  the subgroups $A_1,A_2, A_{12}$ are all normal in $A$ and the quotient group $A/A_{12}$ is abelian of order $i_{12}$.

Next note that the subgroups $A_1/A_{12}$ and $A_2/A_{12}$ of $A/A_{12}$ are disjoint and normal. Hence $A/A_{12}$ contains the subgroup $(A_1/A_{12})(A_2/A_{12})$ whose order divides the order of $A/A_{12}$:  $$|A_1/A_{12}| | A_2/A_{12}|  \mid i_{12}.$$
Observing that $|A_{1}/A_{12}| = \frac{|A/{A_{12}}|}{|A/A_1|}$ we obtain

$$\frac{|A/{A_{12}|}}{|A/A_1|}  \frac{|A/{A_{12}|}}{|A/A_2|} = \frac{i_{12}}{i_1}\frac{i_{12}}{i_2}  \mid i_{12}.$$

Equivalently, $i_{12} |i_1i_2$.
\end{proof}

The following proposition proves abelian representability of $p$-groups for $n=2$ by  establishing a sufficient condition for $n=2$ and showing that all $p$-groups in fact satisfy it.

\begin{pro}
Let $G$ be a $p$-group. Then $G$ is uniformly abelian group representable for $n=2$.
\label{pro:abRepPgps}\end{pro}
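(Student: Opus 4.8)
The plan is to exhibit, for any $p$-group $G$ and any two subgroups $G_1, G_2$, an explicit abelian group $A$ together with subgroups $A_1, A_2$ that reproduce the three indices $i_1 = [G:G_1]$, $i_2 = [G:G_2]$, and $i_{12} = [G:G_{12}]$. Since there are only three nontrivial subsets of $\{1,2\}$, representability for $n=2$ amounts to matching exactly these three numbers, and moreover the entropic data is completely determined by them. The natural candidate is $A = \Z_{p^{a_1}} \oplus \Z_{p^{a_2}}$ (or a suitable $p$-group of the same order as $G$) with $A_1, A_2$ cyclic (or product) subgroups whose indices are forced to be the right powers of $p$; the key point is that all of $i_1, i_2, i_{12}$ are powers of $p$, so we only need to realize a triple of exponents $(\alpha_1, \alpha_2, \alpha_{12})$ of $p$. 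The constraint that must hold for such a realization to exist inside an abelian $p$-group is exactly the divisibility $i_{12} \mid i_1 i_2$ from Lemma \ref{lem:abrepIndices}, i.e. $\alpha_{12} \le \alpha_1 + \alpha_2$, together with the obvious $\alpha_{12} \ge \max(\alpha_1, \alpha_2)$ (since $G_{12} \le G_i$ forces $i_i \mid i_{12}$).

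Accordingly, the first step is to record these two inequalities for the $p$-group $G$: $\max(\alpha_1,\alpha_2) \le \alpha_{12}$ is immediate from containment of subgroups, and $\alpha_{12} \le \alpha_1 + \alpha_2$ is the content of Lemma \ref{lem:abrepIndices} applied in reverse — but wait, that lemma assumes $G$ is already representable, so instead I would prove the inequality $i_{12} \mid i_1 i_2$ directly for $p$-groups. This direct inequality is a counting fact: writing $|G_1 G_2| = |G_1||G_2|/|G_{12}|$ and noting $|G_1 G_2| \le |G|$, we get $|G_{12}| \ge |G_1||G_2|/|G|$, which rearranges to $i_{12} \le i_1 i_2$, and since everything is a power of $p$ this gives $i_{12} \mid i_1 i_2$. (This step needs no normality and works in any finite group, in fact.) The second step is the construction: given exponents with $\max(\alpha_1,\alpha_2) \le \alpha_{12} \le \alpha_1+\alpha_2$, set $A = \Z_{p^{\alpha_1}} \oplus \Z_{p^{\alpha_2}}$ with generators $u, v$, let $A_1 = \langle p^{\alpha_1}u, v\rangle = \langle v\rangle$ so that $[A:A_1] = p^{\alpha_1}$, let $A_2 = \langle u\rangle$ so $[A:A_2] = p^{\alpha_2}$... but then $A_{12} = \{0\}$ and $i_{12} = p^{\alpha_1+\alpha_2}$, which only handles the extreme case. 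To get a general $\alpha_{12}$ in the allowed range, instead take $A_1 = \langle v \rangle$ and $A_2 = \langle u + p^{\,\alpha_1 - (\alpha_1+\alpha_2-\alpha_{12})} v\rangle$ or similar — more cleanly, choose $A_2$ to be a cyclic subgroup of order $p^{\alpha_1+\alpha_2-\alpha_2}$... I will need to pick the subgroup of $A$ of order $p^{\alpha_1 - t}\cdot p^{\alpha_2}$ meeting $A_1$ in a group of the correct size, where $t = \alpha_1 + \alpha_2 - \alpha_{12} \in [0, \min(\alpha_1,\alpha_2)]$; such a subgroup exists in $\Z_{p^{\alpha_1}}\oplus\Z_{p^{\alpha_2}}$ by standard structure theory of finite abelian $p$-groups (e.g. take $A_2 = \langle p^{t} u \rangle \oplus$ suitable part, after relabeling so $\alpha_1 \le \alpha_2$, or use the subgroup lattice directly).

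The uniform part of the statement requires a single abelian $A$ that works for all pairs $G_1, G_2 \le G$. I expect to take $A = \Z_{p^{m}} \oplus \Z_{p^{m}}$ where $p^m = |G|$ (or with the two summands of orders $|G|$ each), which is large enough that for every allowable triple $(\alpha_1,\alpha_2,\alpha_{12})$ with $\alpha_i \le m$ and $\alpha_{12} \le \min(2m, \alpha_1+\alpha_2)$ we can find the required $A_1, A_2$ inside it; here $\alpha_{12} \le m$ actually since $G_{12} \ne \{1\}$ need not hold, but $\alpha_{12} \le \log_p|G| = m$ always, so in fact $A = \Z_{p^m}\oplus\Z_{p^m}$ comfortably contains subgroups of every index up to $p^m$ meeting in every legal intersection. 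I would state a small sublemma: \emph{in $\Z_{p^m}\oplus\Z_{p^m}$, for any $0 \le \alpha_1,\alpha_2 \le m$ and any $\alpha_{12}$ with $\max(\alpha_1,\alpha_2) \le \alpha_{12} \le \min(\alpha_1+\alpha_2, 2m)$ there exist subgroups $A_1, A_2$ with indices $p^{\alpha_1}, p^{\alpha_2}$ and $[A: A_1 \cap A_2] = p^{\alpha_{12}}$}, proved by writing $A_1 = \langle p^{a}u\rangle \oplus \langle p^{b}v\rangle$-type subgroups and computing intersections, choosing exponents to hit the target.

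\textbf{Main obstacle.} The genuinely fiddly part is the explicit construction of $A_1, A_2$ realizing a prescribed intersection index within the allowed range — getting the exponents right so that $[A:A_1] $, $[A:A_2]$, and $[A : A_1\cap A_2]$ simultaneously come out as the desired powers of $p$, and verifying this covers the \emph{entire} interval $[\max(\alpha_1,\alpha_2), \alpha_1+\alpha_2]$ rather than just its endpoints. Everything else (the divisibility inequality, reducing entropic-vector equality to matching three indices, and the passage to the uniform statement) is routine once that construction is in hand.
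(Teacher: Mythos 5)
Your proposal is correct in outline and shares the paper's two-step skeleton: your counting step ($|G_1G_2| = |G_1||G_2|/|G_{12}| \le |G|$, hence $\alpha_{12}\le\alpha_1+\alpha_2$ on exponents) is exactly the paper's Claim 1, and like the paper you then realize the three indices inside an abelian group depending only on $|G|$, which is what gives uniformity. Where you diverge is the realization: the paper takes $A=C_p^m$ with $p^m=|G|$ and, writing $|G_1|=p^i$, $|G_2|=p^j$, $|G_{12}|=p^k$, decomposes $A$ as $C_p^k\times C_p^{i-k}\times C_p^{j-k}\times C_p^{m-(i+j-k)}$ (nonnegativity of the last exponent being precisely the counting inequality), then sets $A_1=C_p^k\times C_p^{i-k}\times\{1\}\times\{1\}$ and $A_2=C_p^k\times\{1\}\times C_p^{j-k}\times\{1\}$, so the intersection is the first factor and all four orders match $G,G_1,G_2,G_{12}$ on the nose --- the ``fiddly'' interpolation problem you flag as your main obstacle simply never arises. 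Your rank-two sublemma is nevertheless true and closes your version in two lines: with $t=\alpha_1+\alpha_2-\alpha_{12}\in[0,\min(\alpha_1,\alpha_2)]$, take $A_1=\langle p^{\alpha_1}u\rangle\oplus\langle v\rangle$ and $A_2=\langle p^{t}u\rangle\oplus\langle p^{\alpha_{12}-\alpha_1}v\rangle$ inside $\Z_{p^m}\oplus\Z_{p^m}$; then $[A:A_1]=p^{\alpha_1}$, $[A:A_2]=p^{t+\alpha_{12}-\alpha_1}=p^{\alpha_2}$, and $A_1\cap A_2=\langle p^{\alpha_1}u\rangle\oplus\langle p^{\alpha_{12}-\alpha_1}v\rangle$ has index $p^{\alpha_{12}}$, covering the whole interval $[\max(\alpha_1,\alpha_2),\alpha_1+\alpha_2]$. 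So your approach works (note that matching indices rather than orders is indeed all the definition demands, so $|A|=p^{2m}\ne|G|$ is harmless), but the paper's four-factor elementary abelian construction buys you the same conclusion with no exponent bookkeeping at all; the only genuine omission in your write-up is the explicit choice of exponents above, which you sketched but did not pin down.
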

 \begin{proof}
Let $p^m$ be the order of $G$. Consider some subgroups $G_1, G_2 , G_{12} = G_1\cap G_2$ of orders $p^i,p^j, p^k$ respectively. We show that the exponents $i,j,k,m$ obey an inequality, which is sufficient to guarantee abelian representability of $(G, G_1, G_2)$.

{\bf{Claim 1. Inequality  $i+j-k \le m$ holds for any $p$-group $G$.}}

To that end, consider the subset $S_{G_1}S_{G_2} = \{g_1g_2~|~g_1\in G_1, g_2 \in G_2 \}$. Note that $S_{G_1}S_{G_2}$ is only a subgroup when one of $G_1, G_2 $ is normal in $G$.
Counting the number of elements in $S_{G_1}S_{G_2}$, we have
$$|S_{G_1}S_{G_2}| = \frac{|G_1||G_2|}{|G_{12}|}= \frac{p^ip^j}{p^k} = p^{i+j-k}.$$
Now since $S_{G_1}S_{G_2}\subseteq G$, we conclude that $i+j-k \le m$.

{\bf{Claim 2. Sufficiency of condition $i+j-k \le m$.\\  }}
We show that $i+j-k \le m$ implies that $(G, G_1, G_2)$ can be represented by some abelian $(A, A_1, A_2)$.
Define $A$ to be the elementary abelian $p$-group $A = C_p^m$.

We can express $A$ as the following direct product:
$$C_p^k \times C_p^{i-k} \times C_p^{j-k} \times C_p^{m-(i+j-k)}.$$

Note that we needed the inequality $i+j-k \le m$ in order for the exponent $m - (i+j-k) $ to be nonnegative.
Now we define subgroups

$$A_1 =  C_p^k \times C_p^{i-k} \times \{1\} \times \{1\},$$
$$A_2 =  C_p^k \times \{1\} \times C_p^{j-k} \times \{1\}.$$

As the orders of $A, A_1, A_2, A_{12}$ are same as those of $G, G_1, G_2, G_{12}$, clearly $(A, A_1, A_2)$ represents $(G, G_1, G_2)$.

Since the choice of $G_1, G_2$ was arbitrary, we conclude that $G$ is abelian group representable for $n=2$.
Moreover, $G$ is uniformly abelian group representable, since $A$ was chosen independently of $G_1, G_2$.

\end{proof}

Claim 2 of the above proof establishes a sufficient condition for abelian group representability for $n=2$, namely the exponents $i,j,k, m$ of orders of $G_1, G_2, G_{12}, G$ obeying the inequality $i+j-k \le m$ . Lemma \ref{lem:abrepIndices} on the other hand implies that this condition is also necessary:
If $(G, G_1, G_2)$ is representable, then  $i_{12} \mid i_1i_2 $. But
$$i_{12} \mid i_1i_2 \iff \frac{ p^m}{p^k} \mid \frac{p^m}{p^{i}}  \frac{p^m}{p^{j}} \iff
\frac{p^{i} p^{j} }  {p^{k}} \mid p^m $$
$$\iff i+j-k\leq m.$$

Hence we have a numerical necessary and sufficient condition for abelian group representability of $(G, G_1, G_2)$ in terms of exponents $i,j, k, m$ of orders of subgroups $G_1, G_2, G_{12}, G$:
$$i+j-k \le m.$$
Certainly we can use similar methods as in the proof of Claim 2, Proposition \ref{pro:abRepPgps} to guarantee abelian representability whenever a similar  ``inclusion-exclusion" inequality holds for higher $n$. It will no longer, however, be  a necessary condition.
 More specifically for $G_1, G_2, G_3 \leq G$  of orders $|G_\Ac| = p^{j_\Ac} \st \Ac \subseteq\{1,2,3\}$, $ |G|= p^m$, the similar  inequality on exponents of orders
$$j_1+j_2 + j_3 - (j_{12}+j_{13}+j_{23}) +j_{123} \leq m $$
while guaranteeing representability of $(G, G_1, G_2, G_3)$, need no longer hold for groups which are abelian  representable. For example, we take the group of quaternions $Q_8$ with subgroups $\<i\>, \<j\>, \<k \>$, the exponents of whose orders violate the inequality for $n=3$: $2+2+2-(1+1+1)+1 =4 \not \leq 3$.

The following question arises:
\begin{rem}
Can we establish a  necessary and sufficient numerical condition for abelian group representability for $n>2$.

\end{rem}


We finish the section by giving a class of uniformly abelian representable $p$-groups for $n=3$.
\begin{pro}
If $p$ is a prime, a non-abelian group $G$ of order $p^3$ is abelian group representable for $n=3$.
\end{pro}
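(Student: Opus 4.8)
The plan is to use the classification of non-abelian groups of order $p^3$ and to verify the numerical sufficiency condition for $n=3$ that was hinted at just before the statement. Recall there are exactly two non-abelian groups of order $p^3$ up to isomorphism (for each prime $p$): one of exponent $p$ and one of exponent $p^2$; for $p=2$ these are $D_4$ and $Q_8$, both of which are already covered by Propositions \ref{pro:dig} and \ref{pro:dic}, so the genuinely new content is the odd-prime case. The first step is therefore to reduce to $p$ odd and invoke the structure theory: in every such $G$ the center $Z(G)$ has order $p$, coincides with the commutator subgroup $[G,G]$, and $G/Z(G) \cong C_p \times C_p$. Consequently every proper nontrivial subgroup has order $p$ or $p^2$, every subgroup of order $p^2$ is normal (index $p$) and contains $Z(G)$, and the subgroups of order $p$ are either $Z(G)$ itself or one of the non-central ones.

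The second step is to record, for any triple $G_1, G_2, G_3$ of subgroups with $|G_\Ac| = p^{j_\Ac}$ and $|G| = p^3$, the inclusion–exclusion inequality
$$ j_1 + j_2 + j_3 - (j_{12} + j_{13} + j_{23}) + j_{123} \le 3, $$
and to show by the same construction as in Claim 2 of Proposition \ref{pro:abRepPgps} that whenever this inequality holds the triple $(G,G_1,G_2,G_3)$ is represented by an elementary abelian group $A = C_p^{3}$ with explicitly chosen subgroups whose orders match the $|G_\Ac|$. (Concretely one writes $A$ as a direct product of cyclic factors indexed by the regions of the Venn diagram, with the multiplicities read off from the $j_\Ac$ by Möbius inversion; the inequality is exactly what guarantees all multiplicities are nonnegative and sum to at most $3$, the leftover being a trivial complementary factor. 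One must also check the side conditions $j_{123} \le j_{ab} \le \min(j_a,j_b)$, which hold automatically for genuine subgroup intersections.) So the whole problem reduces to verifying the displayed inequality for every choice of $G_1, G_2, G_3$ in a non-abelian group of order $p^3$.

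The third and final step is that verification, done by a short case analysis on the sizes of $G_1, G_2, G_3$. If all three have order $p^2$, then each $G_i \supseteq Z(G)$, so every pairwise and triple intersection contains $Z(G)$ and has order at least $p$; since distinct order-$p^2$ subgroups meet in exactly $Z(G)$, the generic contribution is $6 - (3\cdot 1) + 1 \cdot (\text{something} \le \min)$ — one checks the worst case gives $6 - 3 + 1 \cdot? $; here one must be slightly careful: if two of the $G_i$ coincide the formula degenerates favorably, and if all three are distinct then $j_{12}=j_{13}=j_{23}=1$ and $j_{123}=1$, giving $6 - 3 + 1 = 4 > 3$ — \emph{this is the one place the naive inequality fails}, exactly as the remark before the statement warns with $Q_8$. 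So the real work is handling this subcase \emph{without} the inequality: for three distinct maximal subgroups of $G$ (which, modulo $Z(G)$, are three distinct lines in $C_p\times C_p$) one shows directly that the pattern of indices $(p,p,p; p^2,p^2,p^2; p^2)$ is realized by three hyperplanes in $C_p^{?}$ — and here a small elementary abelian group of rank $2$ does \emph{not} suffice, so one must allow $A$ to have rank $3$ and pick $A_1, A_2, A_3$ as three suitable index-$p$ subgroups of $C_p^3$ meeting pairwise in a common $C_p$. I expect this maximal-subgroups subcase to be the main obstacle, since it is precisely where the clean inclusion–exclusion recipe breaks; every remaining configuration (some $G_i$ of order $p$, or some $G_i = Z(G)$, or $G_i = G_j$) satisfies the inequality comfortably and is dispatched by the Step-2 construction. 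Finally, since in each case the representing abelian group was chosen with no reference to the particular $G_i$ beyond their order data, one concludes $G$ is in fact uniformly abelian group representable for $n=3$.
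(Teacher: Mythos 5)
There is a genuine gap in your case analysis. You claim that the inclusion--exclusion inequality $j_1+j_2+j_3-(j_{12}+j_{13}+j_{23})+j_{123}\le 3$ fails \emph{only} when $G_1,G_2,G_3$ are three distinct maximal subgroups, and that ``every remaining configuration satisfies the inequality comfortably.'' That is false. Two further index patterns occurring in non-abelian groups of order $p^3$ also violate it: (i) $|G_1|=|G_2|=p^2$, $|G_3|=p$ with $G_{12}=Z(G)$ and $G_{13}=G_{23}=1$, giving $2+2+1-(1+0+0)+0=4>3$ (e.g.\ $\langle r\rangle,\langle r^2,s\rangle,\langle rs\rangle$ in $D_4$, or $\langle x,z\rangle,\langle y,z\rangle,\langle xy\rangle$ in the Heisenberg group for odd $p$); and (ii) $|G_1|=p^2$, $|G_2|=|G_3|=p$ with all pairwise intersections trivial, giving $2+1+1-0+0=4>3$ (e.g.\ $\langle r\rangle,\langle s\rangle,\langle rs\rangle$ in $D_4$). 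So your Step-2 construction does not dispatch these cases, and your proof as written leaves them unhandled. They do happen to be realizable in $C_p^3$ (e.g.\ for (i) take $A_1=\langle e_1,e_2\rangle$, $A_2=\langle e_1,e_3\rangle$, $A_3=\langle e_2+e_3\rangle$; for (ii) take $A_1=\langle e_1,e_2\rangle$, $A_2=\langle e_3\rangle$, $A_3=\langle e_1+e_3\rangle$), but exhibiting such triples is exactly the work that is missing; the inequality is sufficient but far from necessary here, so the reduction ``check the inequality, then handle one exceptional case'' does not close the argument.

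For comparison, the paper does not use the inclusion--exclusion recipe at all for $n=3$: it lists all eight possible index patterns $(i_1,i_2,i_3,i_{12},i_{13},i_{23},i_{123})$ arising from subgroups of a non-abelian group of order $p^3$ (using that $|Z(G)|=p$ and that every subgroup of order $p^2$ contains $Z(G)$, so distinct maximal subgroups meet exactly in $Z(G)$), and for each pattern writes down explicit subgroups of the single group $A=C_{p^2}\times C_p$ realizing it, which also yields uniformity. Your handling of the all-maximal case (three hyperplanes of $C_p^3$ through a common line) is correct and in the same spirit, and your observation that the $p=2$ groups are already covered by Propositions \ref{pro:dig} and \ref{pro:dic} is fine; to repair the proof you should either complete the exhaustive enumeration of index patterns with explicit representing subgroups (in $C_p^3$ or $C_{p^2}\times C_p$), or prove a genuinely weaker sufficient criterion that covers all patterns that actually occur.
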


\begin{proof}
We show that the group $C_{p^2} \times C_p$ uniformly represents $G$. 
Since $G$ is a non-trivial $p$-group, it has a non-trivial center,~ $Z(G)$.\\ If $\mid Z(G) \mid = p^2$, $G/Z(G)$ is cyclic and hence $G$ is abelian. Thus we have $|Z(G)|=p$.

If $G_1,G_2$ and $G_3$ are three subgroups of $G$ of order $p^2$, then they are normal in $G$ having non-trivial intersection with $Z(G)$, implies that $Z(G)\subseteq G_1, G_2, G_3$ (Theorem 1, Chapter 6, \cite{DF}). So the pairwise intersection of all subgroups of $G$ of index $p$ is $Z(G)$.
Then the possible combinations of indices of $G_1, G_2, G_3, G_{12},G_{13},G_{23},G_{123}$ are

\begin{enumerate}
\item $p,p,p,p^2,p^2, p^2,p^2$
\item $p,p,p^2,p^2,p^2,p^2, p^2$
\item $p,p,p^2,p^2,p^2,p^3, p^3$
\item $p,p,p^2,p^2,p^3,p^3, p^3$
\item $p,p^2,p^2,p^2,p^2,p^3, p^3$
\item $p,p^2,p^2,p^2,p^3,p^3, p^3$
\item $p,p^2,p^2,p^3,p^3,p^3, p^3$
\item $p^2,p^2,p^2,p^3,p^3,p^3, p^3$
\end{enumerate}

The corresponding abelian group representation is given uniformly by
the group $A = C_{p^2} \times C_p$, where $C_{p^2} = \langle g|g^{p^2}=1\rangle,~C_p = \langle r|r^p=1\rangle $. For each combination of the indices above, let the subgroups $G_1, G_2, G_3$ be represented by $A_1, A_2, A_3$ respectively: 
\begin{enumerate}
\item $A_1=\langle g^p\rangle \times \langle r\rangle,~ A_2= \langle g\rangle \times 1 , ~A_3 = \langle(g,r)\rangle,~A_{12}= \langle g^p \rangle \times 1,~A_{13}= \langle g^p \rangle\times 1,~A_{23}=\langle g^p \rangle\times 1, ~A_{123}= \langle g^p \rangle\times 1$
\item $A_1=\langle g^p\rangle \times \langle r\rangle,~ A_2= \langle g\rangle \times 1 , ~A_3 = \langle g^p \rangle \times 1,~A_{12}= \langle g^p \rangle \times 1,~A_{13}= \langle g^p \rangle\times 1,~A_{23}=\langle g^p \rangle\times 1, ~A_{123}= \langle g^p \rangle\times 1$
\item $A_1=\langle g^p\rangle \times \langle r\rangle,~ A_2= \langle g\rangle \times 1 , ~A_3 = 1 \times \langle r \rangle,~A_{12}= \langle g^p \rangle \times 1,~A_{13}= 1 \times \langle r \rangle,~A_{23}=1 \times 1, ~ A_{123}= 1 \times 1$
\item $A_1= \langle (g,r)\rangle, ~  A_2= \langle g\rangle \times 1 , ~A_3 = 1 \times \langle r \rangle,~A_{12}= \langle g^p \rangle \times 1,~A_{13}= 1 \times 1,~A_{23}=1 \times 1, ~ A_{123}= 1 \times 1$
\item $A_1= \langle g^p\rangle \times \langle r\rangle, ~ A_2= \langle g^p \rangle \times 1,~A_3 = 1 \times \langle r \rangle,~ A_{12}=\langle g^p \rangle \times 1,~ A_{13}=1 \times \langle r \rangle,~ A_{23}=1 \times 1, ~ A_{123}=  1 \times 1$
\item $A_1= \langle g\rangle \times 1, ~ A_2= \langle g^p \rangle \times 1,~A_3 = \langle (g^p,r) \rangle,~A_{12}=\langle g^p \rangle \times 1,~A_{13}=1\times 1,~A_{23}=1 \times 1, ~ A_{123}=  1 \times 1$
\item $A_1= \langle(g,r)\rangle, ~ A_2= \langle(g^p,r)\rangle,~A_3 = 1 \times \langle r \rangle,~ A_{12}=1 \times 1,~A_{13}=1 \times 1,~A_{23}=1\times 1,~ A_{123}=  1 \times 1$
\item $A_1= \langle g^p \rangle \times 1, ~ A_2 = 1 \times \langle r \rangle,~A_3 = \langle (g^p,r)\rangle,~A_{12}=1 \times 1,~A_{13}=1 \times 1,~A_{23}=1 \times 1, ~ A_{123}=  1 \times 1$.
\end{enumerate}
We obtain that $A, A_1, A_2, A_3$ represents $G, G_1, G_2, G_3$ and since the group $A$ is independent of subgroups $G_1, G_2, G_3$, the representation of $G$ is uniform. 

\end{proof}
%
%

\section{abelian Group Representability of Nilpotent Groups}
\label{sec:nilpotent}
A finite nilpotent group is a direct product of its Sylow$_p$ subgroups. In this section we obtain a complete classification of abelian representable groups for $n=2$, as well as show that non nilpotent groups are never abelian group representable for general $n$. We start with an easy proposition which allows us to build new abelian representable groups from existing ones.

\begin{pro}
Let $G$ and $H$ be groups of coprime orders. Suppose $G, H$ are abelian group representable, then their direct product $G \times H$ is abelian group representable.
\end{pro}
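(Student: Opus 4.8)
The plan is to reduce abelian representability of $G \times H$ to that of the two factors, using the fact that, because $\gcd(|G|,|H|) = 1$, every subgroup of $G \times H$ is itself a direct product.

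First I would prove the \emph{splitting lemma}: for any subgroup $K \le G \times H$ one has $K = \pi_G(K) \times \pi_H(K)$, where $\pi_G, \pi_H$ denote the coordinate projections. The inclusion $K \subseteq \pi_G(K) \times \pi_H(K)$ is immediate. For the reverse inclusion, take $(g,h) \in K$; since $|g|$ divides $|G|$, $|h|$ divides $|H|$, and these are coprime, the orders $|g|$ and $|h|$ are themselves coprime, so $(g,h)^{|h|} = (g^{|h|},1)$ generates $\langle g\rangle \times \{1\}$, whence $(g,1) \in K$; symmetrically $(1,h) \in K$. Therefore $\pi_G(K)\times\{1\} \subseteq K$ and $\{1\}\times\pi_H(K) \subseteq K$, so $\pi_G(K)\times\pi_H(K) \subseteq K$. (This is also an instance of Goursat's lemma, since a nontrivial common section of $G$ and $H$ would force a common prime divisor of the orders.)

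Next, given arbitrary subgroups $K_1, \ldots, K_n \le G \times H$, write $K_i = G_i \times H_i$ with $G_i \le G$ and $H_i \le H$. Since intersections respect direct products, $K_\Ac = \bigcap_{i\in\Ac}(G_i\times H_i) = G_\Ac \times H_\Ac$ for every $\Ac \subseteq \Nc$, where $G_\Ac = \bigcap_{i\in\Ac}G_i$ and $H_\Ac = \bigcap_{i\in\Ac}H_i$. Hence $[G\times H : K_\Ac] = [G:G_\Ac]\,[H:H_\Ac]$ for all $\Ac$. Now invoke abelian representability of the factors: pick an abelian group $B$ with subgroups $B_1,\ldots,B_n$ representing $(G, G_1,\ldots,G_n)$ and an abelian group $C$ with subgroups $C_1,\ldots,C_n$ representing $(H, H_1,\ldots,H_n)$, so that $[B:B_\Ac] = [G:G_\Ac]$ and $[C:C_\Ac] = [H:H_\Ac]$ for all $\Ac$. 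Set $A = B \times C$ (abelian) and $A_i = B_i \times C_i$; then $A_\Ac = B_\Ac \times C_\Ac$ and $[A:A_\Ac] = [B:B_\Ac]\,[C:C_\Ac] = [G:G_\Ac]\,[H:H_\Ac] = [G\times H : K_\Ac]$, so $(A, A_1,\ldots,A_n)$ represents $(G\times H, K_1,\ldots,K_n)$. As the $K_i$ were arbitrary, $G\times H$ is abelian group representable.

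The only real content is the splitting lemma; everything after it is bookkeeping with indices. Coprimality is used only for that step, and only for $|G|$ and $|H|$ — no condition on $|B|$, $|C|$ is needed. I would also remark that the same argument promotes uniform representability of the factors to uniform representability of the product, since then $A = B\times C$ can be chosen independently of the subgroups $K_i$.
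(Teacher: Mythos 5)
Your proof is correct and follows essentially the same route as the paper: split every subgroup of $G\times H$ as $G_i\times H_i$ using coprimality of the orders, note that intersections and indices factor accordingly, and represent the product by the direct product of the abelian groups representing each factor. The only difference is that you supply a proof of the splitting fact (and a remark on uniformity), which the paper simply asserts.
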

\begin{proof}
Since the orders of $G, H$ are coprime,  any subgroup $K_i \leq G\times H$  is in fact a direct product $K_i = G_i \times H_i$, where $G_i \leq G, ~ H_i \leq H$. To prove abelian group representability for $n$, consider $n$ subgroups $G_1 \times H_1, \ldots , G_n \times H_n$ of $G\times H$.

Since $G$ is abelian group representable, there exists an abelian group $A$ with subgroups $A_1, \ldots, A_n$ which represents $(G, G_1, \ldots, G_n)$. Similarly some $(B, B_1, \ldots, B_n)$ represents $(H, H_1, \ldots, H_n)$.

It is easy to see that $(A \times B , A_1 \times B_1, \ldots, A_n \times B_n)$ represents $G_1 \times H_1, \ldots , G_n \times H_n$. To this end, consider an arbitrary intersection group $G_\Ac \times K_\Ac \leq G \times H$ for $\Ac \subseteq \{1, \ldots, n\}$:

$$[G\times K : (G_\Ac \times K_\Ac)] = [G: G_\Ac][H:H_\Ac] = $$
$$[A: A_\Ac][B:B_\Ac] = [A\times B : (A_\Ac \times B_\Ac)]. $$


\end{proof}

The previous proposition allows us to prove the following theorem.
\begin{thm}
All nilpotent groups are abelian group representable for $n=2$.
\end{thm}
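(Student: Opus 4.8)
The plan is to reduce the statement to the $p$-group case, which has already been settled. A finite nilpotent group $G$ is the direct product of its Sylow subgroups, $G = P_1 \times \cdots \times P_r$, where the $P_i$ have pairwise coprime orders. By Proposition~\ref{pro:abRepPgps}, each $P_i$ is (uniformly) abelian group representable for $n=2$. The preceding proposition shows that abelian group representability is preserved under direct products of groups of coprime order. So I would simply invoke these two facts, iterating the direct-product proposition $r-1$ times, to conclude that $G = P_1 \times \cdots \times P_r$ is abelian group representable for $n=2$.

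Concretely, the induction runs as follows. First, $P_1$ is abelian group representable for $n=2$ by Proposition~\ref{pro:abRepPgps}. Assuming $P_1 \times \cdots \times P_{j}$ is abelian group representable for $n=2$, note that its order is coprime to $|P_{j+1}|$ (the Sylow primes are distinct), and $P_{j+1}$ is abelian group representable for $n=2$ again by Proposition~\ref{pro:abRepPgps}; hence the coprime direct-product proposition gives that $P_1 \times \cdots \times P_{j+1}$ is abelian group representable for $n=2$. After $r-1$ steps we obtain the claim for $G$.

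One point worth spelling out, which the coprime direct-product proposition handles, is that every subgroup $K \leq P_1 \times \cdots \times P_r$ decomposes as $K = K_1 \times \cdots \times K_r$ with $K_i \leq P_i$ (a standard consequence of coprimality of the orders of the factors); this is what makes the indices multiply across the factors and lets one paste together the abelian representing groups. Since everything needed is already in place, there is essentially no obstacle here: the only mild subtlety is bookkeeping the coprimality condition correctly through the iteration so that the direct-product proposition applies at each stage, and noting that we do not even need the uniform version — plain abelian group representability of each Sylow factor suffices.
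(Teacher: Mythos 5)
Your argument is exactly the paper's: decompose the nilpotent group as the direct product of its Sylow subgroups, invoke Proposition~\ref{pro:abRepPgps} for each $p$-group factor, and apply the coprime direct-product proposition (iterated by induction, which you spell out a bit more explicitly than the paper does). Correct, and essentially the same proof.
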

\begin{proof}
We know that finite nilpotent groups are direct products of their Sylow$_p$ subgroups. That is, if $G$ is a finite nilpotent group, then
$G \cong S_{p_1} \times S_{p_2} \ldots \times S_{p_r}$  where $S_{p_i}$ is the Sylow$_{p_i}$ subgroup.
Since each $S_{p_i}$ is abelian group representable by Proposition \ref{pro:abRepPgps}, the previous result implies that $G$ is abelian group representable.
\end{proof}

Conversely, we show that the class of abelian representable groups must be contained inside the class of nilpotent groups for all $n$.

\begin{pro}
\label{pro:nonnil}
A non-nilpotent group $G$ is not abelian group representable for all values of $n$.
\end{pro}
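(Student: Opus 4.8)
The plan is to prove the contrapositive in the following sense: show that any abelian group representable group satisfies a necessary divisibility condition on subgroup indices (Lemma \ref{lem:abrepIndices}), and then show that a non-nilpotent group must contain subgroups violating that condition. The key structural fact about non-nilpotent finite groups that I would invoke is that $G$ is nilpotent if and only if every Sylow subgroup is normal; hence if $G$ is not nilpotent there is a prime $p$ such that a Sylow $p$-subgroup $P$ is \emph{not} normal in $G$. Equivalently, the number $n_p$ of Sylow $p$-subgroups satisfies $n_p > 1$, and by Sylow's theorem $n_p \equiv 1 \pmod p$ and $n_p \mid [G:P]$.

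\medskip

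The main computation is then to take $G_1 = P$ and $G_2 = gPg^{-1}$ for a suitable conjugate $g$, and examine the indices $i_1 = [G:G_1]$, $i_2 = [G:G_2]$, and $i_{12} = [G:P\cap gPg^{-1}]$. Writing $|G| = p^a m$ with $p \nmid m$, we have $i_1 = i_2 = m$, so Lemma \ref{lem:abrepIndices} would force $i_{12} \mid m^2$. The goal is to show this \emph{fails} for an appropriate choice of conjugate: since $P \cap gPg^{-1}$ is a proper subgroup of $P$ (it is proper for at least one choice of $g$, precisely because the $n_p>1$ distinct Sylow subgroups cannot all intersect in the same thing as $P$ itself), its order is at most $p^{a-1}$, so $i_{12}$ is divisible by $p^{a-(a-1)} = p$ at least — in fact $i_{12} = p^{a - v} m'$ where $p^v = |P \cap gPg^{-1}| < p^a$ and $m' \mid m$ with $m'$ accounting for the non-$p$ part of the intersection (which is trivial, so $m' = m$ actually, since $P \cap gPg^{-1}$ is a $p$-group). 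Thus $i_{12} = p^{a-v} m$ with $a - v \ge 1$, while $i_1 i_2 = m^2$, and since $p \nmid m$ we get $p \mid i_{12}$ but $p \nmid i_1 i_2$, contradicting $i_{12} \mid i_1 i_2$. Hence $G$ cannot be abelian group representable even for $n = 2$, and a fortiori not for any $n \ge 2$.

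\medskip

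The subtle point I expect to be the main obstacle is guaranteeing that one can choose a conjugate $gPg^{-1}$ with $P \cap gPg^{-1}$ a \emph{proper} subgroup of $P$ — a priori all Sylow $p$-subgroups of a non-nilpotent group could still share a large common intersection, and indeed $\bigcap_g gPg^{-1} = O_p(G)$, the $p$-core, which can be nontrivial. But for the argument one only needs \emph{some} pair of distinct Sylow subgroups $P, Q$ with $P \cap Q \subsetneq P$, and this is automatic: if $P \cap Q = P$ for all Sylow $p$-subgroups $Q$, then $P \subseteq Q$ for all $Q$, forcing $P = Q$ by order considerations, so $n_p = 1$, contradicting non-normality of $P$. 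So two distinct Sylow $p$-subgroups exist and, having the same order $p^a$, their intersection is properly contained in each. That resolves the obstacle. I would also remark that the argument shows something slightly stronger: any abelian group representable group has all its Sylow subgroups normal, i.e., is nilpotent — which is exactly the reverse containment making the $n=2$ classification an ``if and only if.''

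\begin{proof}
Suppose $G$ is not nilpotent. Then some Sylow $p$-subgroup $P$ of $G$ is not normal, so $G$ has at least two distinct Sylow $p$-subgroups $P$ and $Q$. Write $|G| = p^a m$ with $p \nmid m$, so $|P| = |Q| = p^a$ and $[G:P] = [G:Q] = m$. Since $P \ne Q$ and $|P| = |Q|$, the subgroup $P \cap Q$ is properly contained in $P$; as it is a $p$-group, $|P \cap Q| = p^v$ for some $v < a$, hence
$$[G : P \cap Q] = \frac{p^a m}{p^v} = p^{a-v} m, \qquad a - v \ge 1.$$
Set $G_1 = P$, $G_2 = Q$, so $G_{12} = P \cap Q$ and the indices are $i_1 = i_2 = m$, $i_{12} = p^{a-v} m$. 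If $G$ were abelian group representable, Lemma \ref{lem:abrepIndices} would give $i_{12} \mid i_1 i_2$, i.e.\ $p^{a-v} m \mid m^2$, and hence $p \mid m$, contradicting $p \nmid m$. Therefore $G$ is not abelian group representable for $n = 2$, and consequently not for any $n \ge 2$.
\end{proof}
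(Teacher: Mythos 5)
Your proof is correct and follows essentially the same route as the paper: both use the fact that non-nilpotency yields a non-normal Sylow $p$-subgroup, take two distinct Sylow $p$-subgroups as $G_1,G_2$, and derive a contradiction with Lemma \ref{lem:abrepIndices} since $[G:P\cap Q]=p^{a-v}m$ cannot divide $m^2$ when $p\nmid m$. Your added remark verifying that two distinct Sylow subgroups must intersect properly is a point the paper treats implicitly, but the argument is the same.
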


\begin{proof}
Recall that, a group is nilpotent if and only if all of its Sylow subgroups are normal. Since $G$ is by assumption non-nilpotent, at least one of its Sylow$_p$ subgroup $S_p$ is not normal for a prime $p\mid r=|G|$. We know that $|S_p|=p^a$, the highest power of $p$ dividing $r$, that is, $r=p^am$ with $p\nmid m$. Since $S_p$ is not normal, $G$ has another Sylow$_p$ subgroup $S_p^x=\{xS_px^{-1}\mid x\in G, s\in S_p\}$ of order $p^a$ with $S_p\ne S_p^x$ for some $x$.

Say their intersection  $S_p\cap S_p^x$ is of order $p^t$. Since $S_p$ and $S_p^x$ are distinct, $a > t$ and thus $[G:S_p\cap S_p^x]=mp^{a-t}>m$. Thus we have subgroups $G_1=S_p, G_2=S_p^x, G_{12}= S_p\cap S_p^x$ of indices $m, m, mp^{a-t}$, respectively, in $G$. But this contradicts Lemma \ref{lem:abrepIndices}, which implies that $mp^{a-t} \mid m^2$.

\end{proof}

In particular, we have completely classified groups which are abelian group representable for $n=2$.
\begin{thm}A group $G$ is abelian group representable for $n=2$ if and only if it is nilpotent.
\end{thm}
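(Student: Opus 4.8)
The final statement to prove is: a finite group $G$ is abelian group representable for $n=2$ if and only if it is nilpotent.

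The plan is to observe that this final theorem is simply the conjunction of two results already established in the excerpt, so the proof is a short assembly argument rather than a new computation. For the ``if'' direction, I would invoke the theorem immediately preceding it, namely that all nilpotent groups are abelian group representable for $n=2$; that theorem was itself deduced from Proposition~\ref{pro:abRepPgps} (every $p$-group is uniformly abelian group representable for $n=2$) together with the proposition on direct products of coprime-order abelian representable groups, using the structure theorem that a finite nilpotent group is the direct product of its Sylow subgroups. For the ``only if'' direction, I would invoke Proposition~\ref{pro:nonnil}: a non-nilpotent group is not abelian group representable for any $n$, in particular not for $n=2$. Taking the contrapositive of Proposition~\ref{pro:nonnil} gives that abelian representability for $n=2$ forces nilpotence, and combining with the forward implication yields the equivalence.

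Concretely, the key steps in order are: (1) state that if $G$ is nilpotent then by the theorem ``All nilpotent groups are abelian group representable for $n=2$'' we are done with one direction; (2) state that if $G$ is abelian group representable for $n=2$ then it is in particular abelian group representable ``for all values of $n$'' in the sense required by Proposition~\ref{pro:nonnil}—or more carefully, apply the contrapositive of Proposition~\ref{pro:nonnil} directly, since that proposition exhibits a bad pair of subgroups $G_1,G_2$ (so only $n=2$ data is used) whenever $G$ is non-nilpotent; hence $G$ must be nilpotent; (3) conclude the biconditional.

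The only subtlety worth flagging—and the ``main obstacle'' such as it is—is the quantifier in Proposition~\ref{pro:nonnil}. Its statement reads ``for all values of $n$,'' but its proof actually produces a violation of Lemma~\ref{lem:abrepIndices} using just two subgroups $G_1 = S_p$ and $G_2 = S_p^x$; thus what is really shown is that a non-nilpotent group fails to be abelian representable already for $n=2$. So in writing the proof I would be slightly careful to appeal to this $n=2$ consequence rather than to a literal reading of the quantifier, or equivalently note that failure for $n=2$ is the strongest of the assertions in Proposition~\ref{pro:nonnil}. Beyond this bookkeeping point, there is nothing hard left: the theorem is a clean corollary of the two bracketing results, and the proof should be only a couple of sentences.

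\begin{proof}
This is the conjunction of the two preceding results. If $G$ is nilpotent, then by the theorem that all nilpotent groups are abelian group representable for $n=2$, $G$ is abelian group representable for $n=2$. Conversely, suppose $G$ is abelian group representable for $n=2$. If $G$ were not nilpotent, then the argument of Proposition~\ref{pro:nonnil} produces two subgroups $G_1 = S_p$ and $G_2 = S_p^x$ (for a non-normal Sylow $p$-subgroup $S_p$ and a suitable conjugate $S_p^x \neq S_p$) whose indices $m, m, mp^{a-t}$ in $G$ violate Lemma~\ref{lem:abrepIndices}, since $mp^{a-t} \nmid m^2$ when $a > t$. As these are only two subgroups, this contradicts abelian group representability of $G$ for $n=2$. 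Hence $G$ is nilpotent.
\end{proof}
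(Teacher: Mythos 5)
Your proposal is correct and matches the paper's (implicit) argument exactly: the theorem is stated as the conjunction of the preceding theorem on nilpotent groups and Proposition \ref{pro:nonnil}, whose proof indeed only uses two subgroups, so your quantifier remark is sound. Nothing further is needed.
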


The following corollary generalizes Proposition 4 of \cite{thom}.

\begin{cor}
\label{pro:digg}
If $m$ is not a power of $2$, then the dihedral group $D_m$, the quasi-dihedral groups $\qd_m$ and $\qdx_m$, and the dicyclic group $DiC_{m}$ are not abelian group representable for any $n>1$.
\end{cor}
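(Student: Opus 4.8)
The plan is to reduce the corollary to Proposition \ref{pro:nonnil} by showing that each of the listed groups fails to be nilpotent whenever $m$ is not a power of $2$. Recall that a finite group is nilpotent if and only if it is the direct product of its Sylow subgroups, equivalently, all of its Sylow subgroups are normal. So it suffices to exhibit, for each family ($D_m$, $\qd_m$, $\qdx_m$, $DiC_m$) a non-normal Sylow subgroup when $m$ has an odd prime factor.

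First I would handle the dihedral group $D_m$. Write $m = 2^b t$ with $t > 1$ odd (possible since $m$ is not a power of $2$), so $|D_m| = 2m = 2^{b+1}t$. Let $q$ be an odd prime dividing $t$, and let $q^c \| t$. The cyclic subgroup $\langle r \rangle$ has order $m$, so it contains a unique subgroup $P = \langle r^{m/q^c}\rangle$ of order $q^c$, which is a Sylow$_q$ subgroup of $D_m$. But conjugation by $s$ sends $r$ to $r^{-1}$, hence sends a generator of $P$ to its inverse; although $P$ is carried to itself here, I need a non-normal Sylow to invoke the lemma, so instead I would directly count Sylow$_q$ subgroups or, more cleanly, observe that $D_m$ has trivial center-intersection behaviour forcing non-nilpotency. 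The cleanest route: a dihedral group $D_m$ with $m$ not a power of $2$ is well known to be non-nilpotent because its Sylow$_2$ subgroup (of order $2^{b+1}$) is not normal — indeed if $b = 0$ then the two-element subgroups $\langle r^i s\rangle$ are all conjugate and there are $m > 1$ of them, so none is normal; if $b \geq 1$ one argues that the Sylow$_2$ subgroup $\langle r^{t}, s\rangle$ has $t > 1$ conjugates. Either way one Sylow subgroup is non-normal, so $D_m$ is not nilpotent, and Proposition \ref{pro:nonnil} applies.

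For $\qd_m$, $\qdx_m$ and $DiC_m$, I would argue analogously: each of these groups contains a cyclic subgroup of index $2$ (namely $\langle r\rangle$ of order $m$, resp.\ $\langle a\rangle$ of order $2m$ in $DiC_m$), and the defining relation again acts on it by an automorphism of $2$-power order. Hence for any odd prime $q \mid m$ (resp.\ $q \mid 2m$, i.e.\ $q \mid m$), the same counting argument shows a Sylow$_q$ or Sylow$_2$ subgroup fails to be normal, so the group is non-nilpotent. Alternatively, and perhaps most efficiently, I would invoke the contrapositive of the ``if'' direction established in Section \ref{sec:2-group} together with Proposition \ref{pro:nonnil}: since abelian representability for $n = 2$ is equivalent to nilpotency (the theorem just proved), and since a nilpotent group that is dihedral/quasi-dihedral/dicyclic must be a $2$-group (this is the structural fact to verify — a $q$-part with $q$ odd cannot sit normally inside these groups), the result follows.

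The main obstacle is the bookkeeping of verifying non-normality uniformly across the three non-dihedral families and across the cases $b = 0$ versus $b \geq 1$; none of it is deep, but one must be careful that the cyclic subgroup of index $2$ really does contain the odd-part Sylow subgroup and that the order-$2$ generator genuinely produces more than one conjugate (equivalently, does not centralize that Sylow subgroup). Concretely, the key computation is: if $xrx^{-1} = r^z$ with $z \not\equiv 1 \pmod{q}$ for the relevant odd prime $q$, then the Sylow$_q$ subgroup of $\langle r\rangle$ is normalized but the full Sylow structure of $G$ still forces non-normality elsewhere; the simplest clean statement to nail down is just that these groups have a non-normal Sylow$_2$ subgroup when $m$ is not a $2$-power, which then feeds directly into Proposition \ref{pro:nonnil}.
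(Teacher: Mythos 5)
Your proposal is correct and follows essentially the paper's argument: show that each of $D_m$, $\qd_m$, $\qdx_m$, $DiC_m$ fails to be nilpotent when $m$ has an odd prime factor, then invoke Proposition \ref{pro:nonnil}. The paper avoids the case bookkeeping you worry about (the $b=0$ versus $b\geq 1$ analysis of the full Sylow$_2$ subgroup of $G_m$) by using that subgroups of nilpotent groups are nilpotent and passing to the small subgroup $G_p \leq G_m$ for an odd prime $p \mid m$, where the Sylow$_2$ subgroup $\{1,s\}$ (resp.\ $\langle x\rangle$ in the dicyclic case) is visibly non-normal; your direct non-normality computations in $G_m$ are routine and would also work, so this is a difference of economy rather than substance.
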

\begin{proof}

Let $G_m$ denote either $D_m, \qd_m, \qdx_m$, or $DiC_m$. Since subgroups of nilpotent groups are nilpotent, it suffices to show that $G_p \leq G_m$ is not nilpotent for a prime $p \mid m$,  $p \geq 3$.  

In case when $G_m  = D_m, \qd_m, \qdx_m$ is (quasi-)dihedral, clearly $G_p$ is not nilpotent, as its Sylow$_2$ subgroup $\{1, s\}$ is not normal. Similarly, a Sylow$_2$ subgroup $\<x \> \leq DiC_p$ is not normal, which shows that $Dic_p$ is not nilpotent. 

By  Proposition \ref{pro:nonnil} the result follows.

\end{proof}

\section{Conclusions}
In this paper we propose a classification of finite groups with respect to the quasi-uniform variables induced by the subgroup structure. In particular, we study which finite groups belong to the same class as abelian groups with respect to this classification, that is, which finite groups can be represented by abelian groups. We provide an answer to this question when the number $n$ of quasi-uniform variables is 2: it is the class of nilpotent groups. For general $n$, we show  that nilpotent groups are abelian representable if and only  $p$-groups are, while non-nilpotent groups do not afford abelian representation. Hence the question of classifying abelian representable finite groups is completely reduced to answering the question for $p$-groups.
 
We demonstrate how some classes of $p$-groups afford abelian representation for all $n$, opening various interesting questions for further work. What other classes of $p$-groups can be shown to be abelian group representable? 
Is there a generalization of numerical criterion given for $n=2$ providing a necessary and sufficient condition for abelian representability? It would be extremely interesting to show whether $p$-groups are indeed abelian group representable. If not - what is the grading with respect to representability within $p$-groups (and, consequently, nilpotent groups)? Finally, beyond the nilpotent case, the classification of groups with respect to the quasi-uniform distributions is completely open, e.g. - what are the finite groups which induce the same quasi-uniform variables as solvable groups?


%
%

\end{document}